\documentclass[11pt,reqno]{amsart}
\usepackage[T1]{fontenc}
\usepackage{amsmath,amssymb,amsthm,mathtools}
\usepackage[margin=1in]{geometry}
\usepackage[colorlinks=true,linkcolor=blue,citecolor=blue,urlcolor=blue]{hyperref}
\numberwithin{equation}{section}
\newtheorem{theorem}{Theorem}[section]

\newtheorem{lemma}[theorem]{Lemma}
\newtheorem{corollary}[theorem]{Corollary}
\theoremstyle{remark}\newtheorem{remark}[theorem]{Remark}
\newcommand{\Hol}{\operatorname{Hol}}

\newcommand{\Id}{\mathrm{I}}
\newcommand{\su}{\mathfrak{su}}
\newcommand{\uu}{\mathfrak{u}}
\newcommand{\Ric}{\operatorname{Ric}}
\newcommand{\tr}{\operatorname{tr}}
\newcommand{\diag}{\operatorname{diag}}
\title[Holonomy under Chern--Ricci flows]{Chern holonomy under the first\\and second Chern--Ricci flows}
\author{Yuan Liao} 
\address{University of California San Diego}
\email{y2liao@ucsd.edu}
\date{ 26 October 2026}
\subjclass[2020]{Primary 53E30; Secondary 53C29, 53C55}

\keywords{Chern--Ricci flow, holonomy, pluriclosed metric, Gauduchon connection,
Bismut connection, torsion, backward uniqueness}
\begin{document}

\begin{abstract}
We compare the Chern holonomy of the first and second Chern--Ricci
flows on compact complex manifolds. For the first flow, we construct
an explicit immortal pluriclosed solution on a complex two-torus with
full and restricted
holonomy \(U(2)\), except at one finite time when both groups are
\(U(1)^2\); its smooth limit has holonomy \(SU(2)\).
The solution is uniformly nondegenerate and has bounds of every
fixed differential order for all time. 
In contrast, the second Chern--Ricci flow preserves full and
restricted Chern holonomy under the natural unitary identification,
including the initial time and nonclosed groups.
The proof combines preservation of a connection reduction,
positive-time analyticity, and a path-transport comparison.
As an application, the optimal minimum mean curvature on a
holomorphic bundle is unchanged when metrics are restricted to
any realizable full and restricted holonomy class.
An explicit projected-curvature source explains the failure
of the corresponding Ricci-flow holonomy argument.
\end{abstract}
\maketitle

\section{Introduction}
Let \(M\) be a complex manifold, and let
\(R_{i\bar j k\bar l}\) denote the curvature of the Chern connection
of a Hermitian metric \(g\).
The two natural Chern--Ricci contractions shall rise distinct
evolution equations, in parallel to Ricci flow.
The first Chern--Ricci flow, usually called the Chern--Ricci flow,
is defined by
\begin{equation}\label{eq:crf-main}
 \partial_t g_{i\bar j}
 =-\Ric^{(1)}_{i\bar j}
 =\partial_i\partial_{\bar j}\log\det g,
 \qquad
 \Ric^{(1)}_{i\bar j}
 =g^{k\bar l}R_{i\bar j k\bar l}.
\end{equation}
The second Chern--Ricci flow is defined by
\begin{equation}
 \partial_t g_{i\bar j}=-\Ric^{(2)}_{i\bar j},
 \qquad
 \Ric^{(2)}_{i\bar j}
 =g^{k\bar l}R_{k\bar l i\bar j}.
\end{equation}

The first Chern--Ricci flow in the non-K\"ahler setting
was first studied by Gill through parabolic complex
Monge--Amp\`ere equations \cite{Gill}.
When the first Bott--Chern class vanishes, his work gives
long-time existence and smooth convergence to a
Chern--Ricci-flat Hermitian metric for every Hermitian
initial metric on a compact complex manifold.
Tosatti and Weinkove subsequently developed the theory
for general compact Hermitian manifolds, characterizing
the maximal existence time in terms of the initial metric
\cite[Theorem 1.2]{TW}.
They also proved that, on a compact manifold with ample
canonical bundle, the flow starting from any Hermitian
metric converges after normalization to the unique
negative K\"ahler--Einstein metric
\cite[Theorem 1.7]{TW}.
On complex surfaces with Gauduchon initial metrics,
their work relates the maximal existence time to the
total volume and the areas of curves of negative
self-intersection, connecting the evolution to the
birational geometry of the underlying surface
\cite[Theorems 1.3--1.5]{TW}.

The second Chern--Ricci flow is the member of the Hermitian
curvature flow family with vanishing quadratic torsion term,
and its metric equation is strictly parabolic \cite{ST,BV}.
For this flow, Lee established short-time existence and
Shi-type estimates on complete noncompact Hermitian manifolds
under suitable boundedness and geometric hypotheses
\cite[Theorems 1.1 and 6.1]{LeeSecond}.
Bedulli--Vezzoni proved stability of Chern-flat metrics on
compact complex parallelizable manifolds.
They also showed that metrics diagonal with respect to a
global left-invariant holomorphic frame remain diagonal
and converge to Chern-flat metrics
\cite[Theorem 1.2 and Proposition 6.1]{BV}.

For K\"ahler metrics, these contractions coincide, and both
flows reduce to the K\"ahler--Ricci flow.
Outside the K\"ahler setting, their difference is ruled
by derivatives of the Chern torsion, \[
\operatorname{Ric}^{(1)}_{i\bar{j}}
-\operatorname{Ric}^{(2)}_{i\bar{j}}
=
-g^{k\bar{l}}
\left(
\nabla_{\bar{j}} T_{ik\bar{l}}
+\nabla_k \overline{T_{jl\bar{i}}}
\right).
\]
The distinction is also visible at the level of the connection:
the first contraction traces the endomorphism indices of the
Chern curvature, whereas the second traces its base indices.

These flows raise a natural question about the evolution
of holonomy.
For complete Ricci flows with bounded curvature,
Kotschwar proved preservation of restricted holonomy
\cite{Khol}, and Cook--Kotschwar established preservation
of the full holonomy group \cite{CK}.
Kotschwar's argument uses a closed homogeneous system
for curvature components and transported projections.
In the Hermitian setting, related preservation results
involve both curvature conditions and the choice of connection.
Lee's K\"ahler splitting theorem under nonpositive
bisectional curvature uses a parallel Ricci-null distribution
\cite[Corollary 7.1]{LeeSecond}.
For a different member of the Hermitian curvature flow family,
Ustinovskiy proved preservation of invariant holomorphic
subbundles and parallel sections for a torsion-twisted
connection \cite[Theorems 4.2 and 4.4]{Ust};
see also \cite{UstThesis}.

We investigate whether holonomy rigidity holds for the Chern
connection under the first and second Chern--Ricci flows,
and how torsion modifications of the connection affect
this question.
The two flows exhibit contrasting behavior.
For the first Chern--Ricci flow, our examples show that
bounded curvature and torsion do not suffice to preserve
Chern holonomy.
This failure occurs even for immortal pluriclosed solutions
with a fixed closed torsion three-form and uniform bounds
on curvature, torsion, and all their covariant derivatives.
For the second Chern--Ricci flow, we prove that full and
restricted Chern holonomy are preserved throughout every
smooth finite time interval on a compact complex manifold,
including the initial time.
The preservation theorem require no curvature sign
condition and does not assume the holonomy group
being closed.

\subsection{A contrasting preservation theorem}

The theorem below concerns the full and restricted Chern holonomy
at finite times for arbitrary compact Hermitian initial metrics.
\begin{theorem}\label{thm:intro-second}
Along a compact smooth second Chern--Ricci flow, full and restricted
Chern holonomy are constant after the time-dependent unitary
identification defined by \(U_t=\tfrac12(\Ric^{(2)})^\sharp U\).
The assertion includes the initial time and does not require the
holonomy group to be closed. Parallel holomorphic tensors made
from \(T^{1,0}M\) and its dual are fixed as actual sections.
\end{theorem}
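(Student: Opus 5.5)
Here is the plan. First I fix the unitary identification, then prove that holonomy can only shrink forward in time, and finally rule out shrinking by analyticity and a path-transport comparison, which also covers the initial time.

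\textbf{The identification.} Define \(U_t\in\operatorname{End}(T^{1,0}M)\) by \(\partial_t U_t=\tfrac12(\Ric^{(2)})^\sharp U_t\) with \(U_0=\Id\). Because \(\partial_t g=-\Ric^{(2)}\) and \(\Ric^{(2)}\) is Hermitian, a direct computation gives \(\partial_t\bigl(g_t(U_t\cdot,U_t\cdot)\bigr)=0\). Hence \(U_t\colon(T^{1,0}M,g_0)\to(T^{1,0}M,g_t)\) is unitary. The key structural fact is
\[
\partial_t\Gamma^k_{ij}=-\nabla_i(\Ric^{(2)})^k{}_j .
\]
This says the variation of the Chern connection is a covariant derivative of the endomorphism \(\Ric^{(2)}\) itself, not of a torsion-contracted quantity. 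Pulling back by \(U_t\), the family \(\tilde\nabla^t=U_t^{-1}\circ\nabla^t\circ U_t\) of \(g_0\)-unitary connections therefore satisfies an equation of the form
\[
\partial_t\tilde\nabla=\tilde\nabla A,
\qquad
A=\tfrac12\,U_t^{-1}(\Ric^{(2)})^\sharp U_t\in\mathfrak{u}(T^{1,0}M,g_0)
\]
(the sign is fixed by the choice of \(U_t\)). Its curvature then evolves by \(\partial_t\tilde R=d^{\tilde\nabla}\tilde\nabla A\), a Laplacian-type term built from \(\tilde\nabla\) and \(\tilde R\) via the Bianchi identity. The aim is to show this system preserves the holonomy reduction.

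\textbf{Forward inclusion.} Take \(\mathfrak h\) to be the restricted holonomy algebra at time \(t_0\), extended to a \(\tilde\nabla^{t_0}\)-parallel subbundle \(\mathfrak h\subset\mathfrak u(T^{1,0}M)\); for full holonomy, use the parallel family of reductions. Let \(P\) be the orthogonal projection onto \(\mathfrak h^\perp\). We want \(\tilde\nabla\) to preserve \(\mathfrak h\) and \(\tilde R\) to take values in \(\mathfrak h\) for \(t\ge t_0\). Following Kotschwar's scheme, consider \(X=P\tilde R\), \(Y=P\tilde\nabla\tilde R\), and \(Z=(\tilde\nabla P)\). Since \(A\) is a trace of \(\tilde R\), we have \(A\in\mathfrak h\) whenever \(\tilde R\in\mathfrak h\). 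This yields a closed system: a PDE inequality \(|(\partial_t-\Delta)X|\lesssim|X|+|Y|+|Z|\), and an ODE inequality \(|\partial_t Z|\lesssim|X|+|Y|+|Z|\) (and similarly for \(Y\)). A Grönwall/energy argument on the compact manifold then gives \(X=Y=Z=0\), because all of them vanish at \(t_0\). Hence \(\Hol(t)\subseteq\Hol(t_0)\) for \(t\ge t_0\). Note that no curvature sign condition is used.

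\textbf{Reverse inclusion and the initial time.} Two ingredients are needed:
\begin{itemize}
\item For \(t>0\), the flow is real-analytic in space, by standard parabolic analyticity for strictly parabolic systems. The time-\(t\) holonomy algebra is generated by \(\tilde\nabla^k\tilde R\) at a point, and a backward-uniqueness argument of Kotschwar type for the same \(X,Y,Z\) system gives equality for all positive times.
\item For the initial time, where analyticity fails and the group may be nonclosed, we use a path-transport comparison. For a fixed loop \(\gamma\), compare the parallel transports \(\tau^t_\gamma\) along it by differentiating in \(t\):
\[
\partial_t\tau^t_\gamma=\int_\gamma(\text{transport})\,A\,(\text{transport}),
\]
whose values lie in \(\mathfrak h_0\) by the forward result. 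So \(\tau^t_\gamma\in\Hol(0)\cdot\tau^0_\gamma\), and in fact the two holonomy groups coincide as sets of transports after conjugation.
\end{itemize}

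\textbf{Parallel tensors.} Since \(\partial_t\Gamma=-\nabla\Ric^{(2)}\) and \(\Ric^{(2)}\) preserves the reduction, a parallel holomorphic tensor \(s\) (for which \(\bar\partial s=0\) and \(\nabla s=0\)) satisfies \(\Ric^{(2)}\cdot s=0\). Then \(\partial_t(\nabla s)=-(\nabla\Ric^{(2)})\cdot s=-\nabla(\Ric^{(2)}\cdot s)=0\), so \(s\) itself stays parallel without transport by \(U_t\).

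\textbf{Main obstacle.} I expect the hardest step to be the reverse inclusion at \(t=0\) for nonclosed groups. There the obvious argument — smallest closed subgroup plus analyticity — is unavailable, and the path-transport integral must be controlled in the Lie group rather than the algebra. I would first try proving \(\tau^t_\gamma(\tau^0_\gamma)^{-1}\in\Hol^0(0)\) by continuity in \(t\) within the leaf of the (possibly nonclosed) integral subgroup.
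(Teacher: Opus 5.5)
\textbf{The gap: the reverse inclusion at the initial time.} You never prove $\Hol_p(A(0))\subseteq\Hol_p(A(\tau))$ for some $\tau>0$. Here $A(t)$ is your pulled-back connection $\tilde\nabla^t$, and $P_\gamma(t)$ is its transport along a loop $\gamma$. Your path-transport comparison uses that $\partial_tA$ is valued in $\mathfrak h_0$, the holonomy algebra bundle of $A(0)$. That yields $P_\gamma(t)\in\Hol_p(A(0))\,P_\gamma(0)$, i.e.\ $\Hol_p(A(t))\subseteq\Hol_p(A(0))$. This is the same direction your forward step already gave.

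Writing $P_\gamma(0)=k^{-1}P_\gamma(t)$ with $k\in\Hol^0_p(A(0))$ says nothing about $\Hol_p(A(t))$ unless you already know $k$ lies there. So the sentence ``the two holonomy groups coincide'' is exactly the unproved half. Your fallback, $P_\gamma(t)P_\gamma(0)^{-1}\in\Hol^0_p(A(0))$, is again the forward direction. Loop-by-loop matching cannot supply the missing half: the paper's elliptic-curve remark shows that $P_\gamma(0)\mapsto P_\gamma(t)$ need not even be well defined.

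Nonclosedness is also not the real obstacle. An ODE in a connected immersed subgroup with coefficients in its fixed Lie algebra never leaves that subgroup, closed or not. What is missing is a statement with the \emph{later} connection as reference: $A(0)-A(\tau)\in\Omega^1(\mathfrak h_\tau)$, where $\mathfrak h_\tau$ is the holonomy algebra bundle of $A(\tau)$. Running the same transport ODE with $A(\tau)$ as reference then gives $\Hol_p(A(0))\subseteq\Hol_p(A(\tau))$, and likewise for $\Hol^0_p$.

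\textbf{How the paper closes it, and how you could.} The paper uses time-analyticity on $(0,T)$, obtained by Angenent's rescaling trick with merely smooth coefficients; spatial analyticity is neither needed nor used. It combines this with affine invariance of $\Phi_t(B)=\tfrac12(\bar\partial_B-\partial_B)\,b^{i\bar j}(F_B)_{i\bar j}$. Every $\partial_t^jA(\tau)$ then lies in $\Omega^1(\mathfrak h_\tau)$, so $A(t)-A(\tau)\in\Omega^1(\mathfrak h_\tau)$ for all $t>0$. Letting $t\downarrow0$ uses only closedness of the fibers.

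Within your framework there are two fixes. You could run your backward-uniqueness system on all of $[0,\tau]$, which needs no analyticity at $t=0$, and integrate $\partial_tA\in\Omega^1(\mathfrak h_\tau)$; this integration is also what upgrades your positive-time algebra equality to full groups. Alternatively, finitely many $A(0)$-transported curvature values spanning $\mathfrak h_0(p)$ stay linearly independent for small $t$. Then your forward inclusion $\mathfrak h_t\subseteq\mathfrak h_0$ is an equality for small $t$, and the comparison can be run with $A(t)$ as reference.

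\textbf{Secondary repairs.}
\begin{itemize}
\item The pulled-back equation is $\partial_tA=\tfrac12(\bar\partial_A-\partial_A)\widetilde S$ with $\widetilde S$ Hermitian, not $\tilde\nabla$ of an element of $\uu$. Your $\tfrac12U^{-1}(\Ric^{(2)})^\sharp U$ is self-adjoint, and $\mathfrak h$-valuedness uses $\mathfrak h_{\mathbb C}\cap\uu(E)=\mathfrak h$.
\item A closed Kotschwar system needs $\tilde\nabla^2P$ in the ODE part and $P\tilde\nabla\tilde R$ in the PDE part, since $(\partial_t-\Delta)(P\tilde R)$ contains $(\Delta P)\tilde R$.
\item Your tensor computation is circular: it assumes $s$ is parallel at time $t$ to show $\partial_t(\nabla s)=0$. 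Deduce it instead from $A(t)-A(\tau)\in\Omega^1(\mathfrak h_\tau)$ and $s_t=\tfrac12\rho(S)s=0$, with $\rho$ the induced tensor representation.
\end{itemize}
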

Here \((\Ric^{(2)})^\sharp\) denotes the \(g(t)\)-self-adjoint
endomorphism of \(T^{1,0}M\) characterized by
\[
 g(t)\bigl((\Ric^{(2)})^\sharp X,\overline Y\bigr)
 =\Ric^{(2)}(t)(X,\overline Y).
\]

The proof in Section~\ref{sec:second} also applies to Hermitian bundle
heat flow over a fixed Hermitian base. Forward preservation of
principal-bundle reductions is established in \cite[Lemma 7]{BJS};
related connectedness and reduction statements appear in
\cite[Section 4]{BGO}. The Lie-group ODE principle used to compare
actual full groups is also present in \cite[Proposition 3.2 and
Lemma 3.3]{CK}. Here we implement these mechanisms for the
self-coupled second Chern--Ricci flow, use positive-time analyticity
for both time directions, and give a separate argument at the
initial time. The proof does not assume closed holonomy.
The second raised contraction takes the base-slot trace of
endomorphism-valued curvature and hence lies in the complexified
holonomy algebra. 

Section~\ref{sec:mean} applies the bundle version of this theorem
to mean curvature. The optimal lower bound of Li--Zhang--Zhang
\cite[Theorem 1.5]{LZZ} can be approached within the full and
restricted holonomy class of any prescribed initial metric,
although the optimum need not be attained in that class.

\subsection{An explicit first-flow example}
We first give a matrix-valued construction on a complex two-torus.

\begin{theorem}\label{thm:intro-chern}
For every \(\tau>0\), there is an explicit immortal first Chern--Ricci
flow of pluriclosed metrics on a complex two-torus such that its full
and restricted Chern holonomy groups are \(U(2)\) at every finite
\(t\ne\tau\), and a maximal torus \(U(1)^2\) at \(t=\tau\).
The flow converges smoothly to a metric with full and restricted
Chern holonomy \(SU(2)\). All spatial and temporal derivatives of
curvature and torsion are uniformly bounded.
\end{theorem}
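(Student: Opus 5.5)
We will build the example by an ansatz on the torus \(\mathbb C^2/\Lambda\) with coordinates \((z_1,z_2)\). The metric is taken to depend only on \(x=\operatorname{Re} z_1\), so the first Chern--Ricci flow \eqref{eq:crf-main} reduces to a one-dimensional parabolic system. Concretely, set
\[
g_{i\bar j}(x,t)=\bigl(e^{\phi(x,t)}\,H(x,t)\bigr)_{i\bar j},
\]
where \(H\) is a Hermitian \(2\times2\) matrix with constant determinant and off-diagonal entry \(h_{1\bar 2}=a(t)\,e^{\mathrm{i} kx}\), or a similar trigonometric choice. Since \(\partial_t g=\partial\bar\partial\log\det g\) only involves the scalar \(\log\det g=2\phi+\text{const}\), the matrix part \(H\) is free to remain time-independent. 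The flow then becomes a scalar equation for \(\phi\), namely \(\partial_t(e^{\phi}H)=\tfrac12\phi_{xx}\,E_{11}\), with \(E_{11}\) the elementary matrix. Rather than prescribe \(\phi\) and \(H\) independently, we will solve this equation exactly. The natural exact solution is \(g(t)=g_\infty+\psi(x,t)\,\mathrm{i}\,dz_1\wedge d\bar z_1\), where \(g_\infty\) is a flat-determinant, non-Kähler, pluriclosed Hermitian metric and \(\psi\) satisfies a heat-type equation. Choosing \(\psi=e^{-\lambda t}\cos x\) with a correction chosen so that \(\log\det\) is linear in \(\psi\) gives an explicit closed form.

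Pluriclosedness should follow because the \((1,1)\)-form depends only on \(x\). Then \(\partial\bar\partial\omega\) involves only \(\partial_{z_1}\partial_{\bar z_1}\) acting on the \(dz_2\wedge d\bar z_2\) coefficient, and we arrange that coefficient to be affine-harmonic in \(x\), hence periodic and constant, or else make it vanish by a direct check. The torsion three-form \(d^c\omega\) stays fixed since the evolving term \(\psi\,dz_1\wedge d\bar z_1\) is \(d^c\)-closed. Immortality, uniform nondegeneracy and the bounds on every fixed derivative order are immediate from the explicit formula, since \(\psi\) and all its derivatives decay exponentially. Smooth convergence to \(g_\infty\) follows for the same reason.

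For holonomy we will use the Ambrose--Singer theorem. On a torus, full holonomy differs from restricted holonomy only by the images of loops in \(\pi_1\). We will show that these lie in the identity component by computing the transport explicitly; alternatively, \(\Hol\) is connected whenever the restricted group is already all of \(U(2)\) or contains the relevant torus. The restricted holonomy algebra is spanned by the curvature endomorphisms \(R_{i\bar j}\) and their covariant derivatives. The trace part is \(\tr R=\partial\bar\partial\log\det g\), which equals \(\tfrac12\psi_{xx}\) up to a factor, and it vanishes identically only if \(\psi_{xx}\) is zero at time \(t\). The tracefree part comes from \(g_\infty\), which we choose with nontrivial non-abelian curvature via the off-diagonal phase \(e^{\mathrm{i}kx}\). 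This yields \(\su(2)\) for the limit, and \(\su(2)\oplus\uu(1)=\uu(2)\) whenever the trace part is nonzero somewhere.

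The key design is a coupling under which, at exactly one time \(\tau\), the metric becomes diagonal in a suitable global frame. At that time the curvature is diagonal and stays in the Cartan subalgebra, so the holonomy drops to \(U(1)^2\). This requires a time-dependent off-diagonal amplitude \(a(t)\) vanishing only at \(t=\tau\), for instance \(a(t)\propto (t-\tau)\) times a decaying factor. The determinant then has to be compensated so that the flow equation still holds exactly.

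The main obstacle is this last step. We must make the flow equation exact while having the off-diagonal part vanish precisely at \(t=\tau\) and not vanish in the limit. The tracefree curvature must also generate all of \(\su(2)\) at every other time. Our first attempt will be a \(2\times2\) ansatz \(g=\begin{pmatrix} A(x,t)&B(x,t)\\ \bar B&D\end{pmatrix}\) with \(D\) constant. The flow then only evolves \(A\) by \(\partial_tA=\tfrac12\partial_x^2\log(AD-|B|^2)\). Setting \(B=b\,e^{\mathrm{i}x}\) with \(b\) constant, the limit is non-diagonal; the \(U(1)^2\) time would then instead be realized by a gauge in which the trace and tracefree curvature pieces commute. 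We will check by computing the bracket \([R_{1\bar1},\nabla R]\) that generation of \(\su(2)\) fails exactly at one time, and tune the initial data so that this time equals \(\tau\).
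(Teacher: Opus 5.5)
There is a genuine gap. Your proposal never supplies the mechanism that makes the holonomy drop at \(t=\tau\), and the mechanism it suggests is incompatible with the flow. For metrics depending only on \(x=\Re z^1\), the right-hand side \(\partial_i\partial_{\bar j}\log\det g\) is nonzero only for \(i=j=1\). Hence \(g_{1\bar2}\), \(g_{2\bar1}\), \(g_{2\bar2}\) are constant in time, and no off-diagonal amplitude \(a(t)\) vanishing at \(\tau\) is available. Your first ansatz \(g=e^\phi H\) with \(H\) fixed is also inconsistent, since it would force \(\phi_tH\) to be a multiple of \(E_{11}\).

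Your fallback (a frozen entry \(be^{ix}\), a check of one bracket \([R_{1\bar1},\nabla R]\), and ``tuning'' the data) is not an argument, because the failure of one bracket gives no upper bound on the holonomy. In fact a full \(U(1)^2\) reduction requires two orthogonal Chern-parallel, hence holomorphic, line subbundles of the trivial bundle on the torus. These have degree zero and so are spanned by constant vectors, so \(g(\tau)\) must be diagonal in a constant holomorphic frame. With \(g_{1\bar2}=be^{ix}\) and \(g_{2\bar2}\) constant, this forces \(g_{1\bar1}(\tau)\) to be a trigonometric polynomial of degree one. Nothing in your plan produces a solution of the nonlinear equation \(p_t=\tfrac14(\log p)''\), \(p=\det g/g_{2\bar2}\), passing exactly through such data. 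Likewise, \(e^{-\lambda t}\cos x\) ``with a correction'' is not an exact solution of this equation.

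The missing idea is to keep the frozen entries real, \(g_{1\bar2}=d(x)\) and \(g_{2\bar2}=B\). One lets \(g_{1\bar1}=P_{s(t)}+d^2/B\) with the exact solution \(P_s=\sinh s/(\cosh s+\cos x)\), \(s=s_0+t/4\), so that \(\det g=BP_{s(t)}\). Then one chooses \(d=\sqrt{B^2-BP_S}\), \(S=s(\tau)\), so that \(g_{1\bar1}(\tau)=B\). Now \(g(\tau)=\begin{pmatrix}B&d\\d&B\end{pmatrix}\) is diagonalized by the constant frame \((1,\pm1)\) into \(\diag(B+d,B-d)\), even though the off-diagonal entry never vanishes. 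This global frame bounds the full group by \(U(1)^2\), and linear independence of \((\log(B\pm d))''\) gives equality.

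You also need the statement at every other time, which your plan leaves to a hoped-for computation. In a global unitary frame the paper finds \(D_y\mathcal F_{xy}=C(x)J_2\), \(J_2=E_{21}-E_{12}\), with \(C=(\delta'd''-d'\delta'')/(4p\sqrt{Bp})\) and \(\delta=P_s-P_S\). It shows \(C\not\equiv0\) for \(s\ne S\) by comparing poles as rational functions of \(\cos x\). Together with the nonzero trace \((\log P_s)''\), this gives \(U(2)\) at every finite \(t\ne\tau\). At the limit \(p=1\) the connection is traceless and \(D_y\mathcal F_{xy}=-\tfrac{q^3}{2}J_2\ne0\), with \(q=d'/\sqrt B\), which gives \(SU(2)\).

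Your pluriclosedness, fixed-torsion, and bounded-geometry remarks are fine once an exact solution is fixed. The holonomy assertions, which are the content of the theorem, are not established.
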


The scalar ingredient is
\[
 P_s(x)=\frac{\sinh s}{\cosh s+\cos x},\qquad
 \partial_sP_s=(\log P_s)''.
\]
Fix \(S=s_0+\tau/4\), choose \(B>\max P_S\), and set
\[
 d=(B^2-BP_S)^{1/2},\qquad
 g(t)=
 \begin{pmatrix}P_{s_0+t/4}+d^2/B&d\\d&B\end{pmatrix}.
\]
Its determinant is \(BP_{s_0+t/4}\), so the flow reduces exactly to
the scalar identity. At the target time the two constant eigenlines
are parallel. At other times a curvature commutator generates the
missing non-Abelian directions. The exact calculation is in
Theorem~\ref{thm:chern-crossing}.

\subsection{Prior work and scope}
The explicit Hopf solutions originate in Tosatti--Weinkove
\cite{TW} and are written in the parametrization used here by
Yang \cite{YangHopf}. They already contain an isolated interior
decrease of Chern holonomy.
In complex dimension two, his formula (2.2), with \(T_0>0\), has
Chern holonomy \(U(2)\) except at \(t=T_0/2\), when it is the central
\(U(1)\). We supply the holonomy calculation in Appendix~\ref{app:hopf}.
Theorem~\ref{thm:intro-chern} instead gives an
immortal, uniformly nondegenerate example on a complex torus,
with all-order bounds for all time and an explicitly determined
special-unitary limit. The Hopf solution becomes singular in finite
time, although every compact interval around its crossing has
bounded curvature and torsion.
The scalar profile \(P_s\) is classical; see
\cite[Section 3.11.2, formula (3.53)]{Fox}, which also records the
corresponding immortal surface flow on a torus. General long-time
convergence for vanishing first Bott--Chern class is due to Gill
\cite{Gill}. These results already supply the long-time analytic
background on a complex torus. Likewise, the rescaling
\(\widetilde g(t)=c\,g(t/c)\), \(c>0\), moves a crossing time.
The construction here combines the required geometric conditions
with exact full-group calculations; its direct estimates provide
a self-contained verification rather than a new general
long-time existence theorem.

The main contrast is between the first-flow construction in
Section~\ref{sec:crossing} and the second-flow preservation theorem
in Section~\ref{sec:second}. Section~\ref{sec:mean} gives the
mean-curvature application. Section~\ref{sec:further} collects
the projected-curvature obstruction, examples for torsion-modified
connections, and sufficient conditions for preservation.
Appendix~\ref{app:hopf} records the Chern holonomy calculation
for the previously published Hopf solution.

\subsection*{Conventions}
All manifolds in the flow statements are connected and without
boundary. We use \(F=dA+A\wedge A\) for a connection form acting on column
coefficients. In holomorphic coordinates,
\[
 (\Gamma_i^{\mathrm{Ch}})^k{}_j
  =g^{k\bar\ell}\partial_i g_{j\bar\ell},\qquad
 R_{i\bar j}=-\partial_{\bar j}\Gamma_i^{\mathrm{Ch}}.
\]
For \(x=\Re z^1\), \(\partial_1\partial_{\bar1}=\frac14\partial_x^2\).
We compute restricted groups by Ambrose--Singer \cite{AS} and bound
full holonomy separately by a globally defined connection form.

\section{An explicit pluriclosed Chern holonomy crossing}\label{sec:crossing}
\subsection{The solution}
Let \(M=\mathbb C^2/(2\pi\mathbb Z+2\pi i\mathbb Z)^2\), with
\(z^j=x^j+iy^j\), and put \(x=x^1\), \(y=y^1\).
We use the first unnormalized Chern--Ricci flow convention
\[
 \partial_t g_{i\bar j}=\partial_i\partial_{\bar j}\log\det g.
\]
Fix \(s_0>0\) and \(\tau>0\), and define
\[
 s(t)=s_0+\tfrac14t,\quad S=s(\tau),\qquad
 P_s(x)=\frac{\sinh s}{\cosh s+\cos x}.
\]
Choose \(B>\coth(S/2)=\max_xP_S(x)\), and set
\begin{equation}\label{eq:crh-2-1}
 d(x)=\sqrt{B^2-BP_S(x)},\qquad
 G(t,x)=
 \begin{pmatrix}
 P_{s(t)}(x)+d(x)^2/B&d(x)\\
 d(x)&B
 \end{pmatrix}.
\end{equation}
All matrices below act on column vectors.
All entries in \eqref{eq:crh-2-1} are real, so no transpose convention is involved.

\begin{theorem}\label{thm:chern-crossing}
The metric \eqref{eq:crh-2-1} solves the first Chern--Ricci flow for all \(t\geq0\)
and is pluriclosed at every time.
It is uniformly equivalent to a fixed flat metric, with uniformly bounded
derivatives of every order. The full and restricted Chern holonomy groups,
up to unitary conjugacy, are
\begin{equation}\label{eq:crh-2-2}
 \Hol(g(t))=\Hol^0(g(t))=
 \begin{cases}
 U(2),&t\ne\tau,\quad t<\infty,\\
 U(1)\times U(1),&t=\tau.
 \end{cases}
\end{equation}
As \(t\to\infty\), the solution converges smoothly to
\[
 G_\infty(x)=
 \begin{pmatrix}1+d^2/B&d\\d&B\end{pmatrix},
\]
whose full and restricted Chern holonomy groups are \(SU(2)\).
\end{theorem}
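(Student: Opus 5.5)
The plan is to reduce everything to one-variable calculus in \(x\). The key structural facts are that \(G\) depends only on \((t,x)\) and that \(\det G=BP_{s(t)}\).

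\emph{Flow equation and pluriclosedness.} Since \(\det G=BP_{s(t)}\), we have
\[
\partial_i\partial_{\bar j}\log\det G=\delta_{i1}\delta_{j1}\,\tfrac14(\log P_s)''.
\]
By the scalar identity \(\partial_sP_s=(\log P_s)''\) and \(\dot s=\tfrac14\), this equals \(\partial_tP_{s(t)}=\partial_tG_{1\bar1}\). The entries \(G_{1\bar2},G_{2\bar2}\) are time-independent, so the flow holds for all \(t\ge0\). Smoothness of \(d\) follows from \(B>\max P_S\), which gives \(d>0\).

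For pluriclosedness, the coefficients depend only on \(x\). The only possible component of \(\partial\bar\partial\omega\) is therefore \(\partial_1\partial_{\bar1}G_{2\bar2}\,dz^1\wedge d\bar z^1\wedge dz^2\wedge d\bar z^2\), and it vanishes because \(G_{2\bar2}=B\) is constant.

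\emph{Uniform bounds and the limit.} For \(s\ge s_0\) we have \(\tanh(s/2)\le P_s\le\coth(s/2)\), and \(P_s\to1\) in every \(C^k\) norm, exponentially fast in \(s\). Together with \(\partial_sP_s=(\log P_s)''\), this bounds every mixed \((t,x)\)-derivative of \(G\) uniformly. The entries of \(G\) are bounded and \(\det G\ge B\tanh(s_0/2)\), so \(G\) is uniformly equivalent to the flat metric. Hence \(\Gamma_1=\tfrac12G^{-1}G'\), \(\Gamma_2=0\), the torsion, the curvature
\[
R_{1\bar1}=-\tfrac14\,(G^{-1}G')',
\]
and all their derivatives are uniformly bounded. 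The same estimates give smooth convergence to \(G_\infty\).

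\emph{Holonomy.} In the global frame \(\partial_1,\partial_2\), the connection form is \(A=\Gamma_1(x)\,dz^1\), so \(R_{1\bar1}\) is the only curvature component.

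At \(t=\tau\) we get \(G_\tau=\begin{pmatrix}B&d\\d&B\end{pmatrix}\). Its eigenvectors \((1,1)\) and \((1,-1)\) are constant, with eigenvalues \(B\pm d\). Hence \(A\) is diagonal in this global frame, and the full group lies in \(U(1)^2\), since every transport along every loop is generated by \(A\). The two diagonal curvature entries are \(-\tfrac14(\log(B\pm d))''\). They are not proportional, which I will check by a Taylor expansion at \(x=0\). Ambrose--Singer then gives \(\Hol^0=\Hol=U(1)^2\).

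For \(t\ne\tau\), Ambrose--Singer shows the holonomy algebra contains the values \(R_{1\bar1}(x)\) for all \(x\) along the \(x\)-line. Its trace part is \(-\tfrac14(\log P_s)''\not\equiv0\), which gives the center. To obtain \(\su(2)\), I will write
\[
G=G_\tau+(P_s-P_S)\,e_1e_1^{T}
\]
and show that the curvature values at two points, say \(x=0\) and \(x=\pi\), do not commute. Equivalently, I will show that some curvature value fails to preserve the eigenline decomposition and is not a multiple of a fixed element. Together with the trace part, this gives algebra \(\uu(2)\), so \(\Hol^0=\Hol=U(2)\).

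For \(G_\infty\), \(\det G_\infty=B\) is constant. So \(\tr A=\partial\log\det=0\), and \(A\) takes values in the trace-free, \(G_\infty\)-skew part, which forces \(\Hol\subset SU(2)\). The same non-commutation argument, now with \(P_s\equiv1\), gives \(\su(2)\).

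\emph{Main obstacle.} I expect the hardest step to be the non-commutation claim for every \(t\ne\tau\) and for the limit. It needs an explicit and careful computation of \((G^{-1}G')'\) at two points, where \(d'\) and \(P'\) vanish, so that only second derivatives survive. The computation must also exploit the sign of \(P_s-P_S\) to rule out accidental commutation. The factor \(P_s-P_S\), which is zero exactly at \(t=\tau\), should control the off-diagonal part in the eigenframe of \(G_\tau\).
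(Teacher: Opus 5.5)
You handle the flow equation, pluriclosedness, the uniform bounds, the smooth limit, and the case \(t=\tau\) correctly. At \(\tau\) the constant eigenframe makes both transport and curvature diagonal, so Ambrose--Singer really does reduce to the span of the curvature values. The paper checks that these are independent by using periodicity to make \(a\log(B+d)+b\log(B-d)\) constant and then differentiating in \(d\), which is simpler than a Taylor expansion.

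The gap is in the holonomy step for \(t\neq\tau\) and for \(G_\infty\). Ambrose--Singer does not put the coordinate matrices \(R_{1\bar1}(x)\), for all \(x\), into one holonomy algebra. It puts \(P_\gamma^{-1}R_{1\bar1}(\gamma(1))P_\gamma\) into the algebra at \(\gamma(0)\), where \(P_\gamma\) is parallel transport. The frame \(\partial_1,\partial_2\) is not parallel along the \(x\)-line, since \(A(\partial_x)=\Gamma_1=\tfrac12G^{-1}G'\neq0\). So \(R_{1\bar1}(0)\) and \(R_{1\bar1}(\pi)\) are endomorphisms of different fibres, compared through a non-parallel identification. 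Showing \([R_{1\bar1}(0),R_{1\bar1}(\pi)]\neq0\) therefore says nothing, by itself, about whether the holonomy algebra is abelian.

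Your ``equivalent'' reformulation is not equivalent either. For \(t\ne\tau\), a hypothetical abelian reduction need not use the eigenlines \((1,\pm1)\) of \(G_\tau\), so breaking those particular lines does not exclude reducibility. Finally, the non-commutation computation is only announced, and it would have to hold at every \(t\neq\tau\), not merely generically.

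The paper never compares different fibres. In the global unitary frame \(F\) it computes, at a single point, both \(\mathcal F_{xy}\) and \(D_y\mathcal F_{xy}=[\mathcal A_y,\mathcal F_{xy}]=C(x)J_2\). The latter is the derivative of curvature transported along a short \(y\)-curve, so both lie in the holonomy algebra at that point. The paper then proves \(C\not\equiv0\) for \(s\neq S\) by the pole argument excluding \(\delta=ad+c\), and brackets with \(J_2\) to obtain \(\su(2)\). In the limit, \(D_y\mathcal F_{xy}=-\tfrac{q^3}{2}J_2\) is nonzero somewhere.

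If you want to keep a global comparison, you need an extra lemma. A Chern-parallel splitting on the universal cover consists of holomorphic line subbundles. The connection is invariant under translations in \(y,x^2,y^2\), so these subbundles depend only on \(x\), and hence are constant in the coordinate frame. Abelian non-central holonomy would then make every \(R_{1\bar1}(x)\) diagonal in one constant basis, which legitimizes your comparison. The purely central case is excluded because the traceless part of the curvature is not identically zero. Better still, the lemma reduces the question to whether one constant holomorphic frame makes every \(G(t,x)\) diagonal. That is ruled out by the linear independence of \(1,P_s,P_S,d\) for \(s\neq S\), and of \(1,P_S,d\) for \(G_\infty\).
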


Indeed, the $d(x)^2/B$ term on top left is to ensure \(\det G=BP_{s(t)}\), and the Schur complement of the lower
right entry is \(P_{s(t)}>0\). Thus \eqref{eq:crh-2-1} is positive definite. The identity
\[
 \partial_sP_s=(\log P_s)''
 =\frac{1+\cosh s\cos x}{(\cosh s+\cos x)^2}
\]
shows that \(G_t=\frac14(\log\det G)''E_{11}\), exactly the required flow.
For uniform estimates, write
\[
 P_s(x)=\frac{1-r^2}{1+2r\cos x+r^2},\qquad r=e^{-s}.
\]
The matrix \(G\) and its inverse are smooth on the compact parameter
set \(0\leq r\leq e^{-s_0}<1\), \(x\in\mathbb R/2\pi\mathbb Z\).
Since \(\partial_t=-\frac14r\partial_r\), all metric derivatives are
uniformly bounded. This also bounds the Chern torsion, curvature, and
every fixed order of their covariant and temporal derivatives.
The same argument gives smooth convergence to \(G_\infty\).
All metrics are complete because \(M\) is compact.

The pluriclosed assertion is direct. In
\(\partial\bar\partial\omega\), differentiation of any matrix entry
introduces \(dz^1\wedge d\bar z^1\). The entries in the first row or
column then give a repeated differential, while the remaining entry
\(g_{2\bar2}=B\) is constant. Thus
\(\partial\bar\partial\omega=0\).
Moreover \(d\omega(t)=d\omega(0)\), as holds for every first
Chern--Ricci flow. In particular, the Bismut torsion three-form,
up to the convention-dependent sign of \(d^c\omega\), is fixed in
time and closed. Fixed closed torsion does not prevent the Chern
holonomy crossing in this example.

\subsection{A global unitary connection formula}
For the moment put \(p=P_{s(t)}\), \(u=\log p\), and
\[
 q=\frac{d'}{\sqrt{Bp}},\qquad
 J_1=i(E_{11}-E_{22}),\quad
 J_2=E_{21}-E_{12},\quad J_3=i(E_{12}+E_{21}),\quad Z=i\Id.
\]
The periodic global unitary frame is represented in the holomorphic
coordinate frame by
\[
 F=\begin{pmatrix}p^{-1/2}&0\\-d/(B\sqrt p)&B^{-1/2}\end{pmatrix},
 \qquad F^*GF=\Id.
\]
The Chern connection in the holomorphic frame is
\(\frac12G^{-1}G'\,dz^1\). Direct multiplication gives
\[
 F^*G'F=
 \begin{pmatrix}u'&q\\q&0\end{pmatrix},\qquad
 F^{-1}F'=
 \begin{pmatrix}-u'/2&0\\-q&0\end{pmatrix}.
\]
Consequently its unitary connection form is
\begin{equation}\label{eq:crh-2-3}
 \mathcal A=-\frac q2J_2\,dx+
 \left(\frac{i u'}2E_{11}+\frac q2J_3\right)dy.
\end{equation}
There are no components in the \(x^2,y^2\) directions.
Using \([J_2,J_3]=-2J_1\), its curvature is
\begin{equation}\label{eq:crh-2-4}
 \mathcal F_{xy}=\frac{u''}{4}Z+
 \left(\frac{u''}{4}+\frac{q^2}{2}\right)J_1+
 \left(\frac{q'}2-\frac{qu'}4\right)J_3.
\end{equation}
Since all coefficients depend only on \(x\), differentiation after parallel
transport in the \(y\) direction gives
\begin{equation}\label{eq:crh-2-5}
 D_y\mathcal F_{xy}
 =[\mathcal A_y,\mathcal F_{xy}]
 = C(x)J_2,
\end{equation}
where
\begin{equation}\label{eq:crh-2-6}
 C=\frac{B(p'd''-d'p'')-2(d')^3}{4Bp\sqrt{Bp}}
   =\frac{\delta'd''-d'\delta''}{4p\sqrt{Bp}},
 \qquad \delta=p-P_S.
\end{equation}
For the second equality use \(P_S=B-d^2/B\).
The Ambrose--Singer theorem \cite{AS} implies that both
\(\mathcal F_{xy}\) and \(D_y\mathcal F_{xy}\), at any chosen base point,
belong to the restricted holonomy algebra: the latter is the derivative
of curvature transported from a short \(y\)-curve.

\subsection{The holonomy calculation}
Suppose first that \(s\ne S\). We claim that \(C\) is not identically zero.
On \(0<x<\pi\), both \(P_S'\) and \(-d'\) are positive. If \(C=0\)
identically, then
\[
 (\delta'/d')'=0,\qquad \delta=a d+c
\]
on that interval, for constants \(a,c\). Writing \(X=\cos x\),
\(P_s=\sinh s/(\cosh s+X)\) is a rational function of \(X\).
If \(a=0\), the identity \(P_s-P_S=c\) is impossible because its left
side has two distinct poles. If \(a\ne0\), squaring gives the rational
identity
\[
 (P_s-P_S-c)^2=a^2(B^2-BP_S).
\]
The left side has a double pole at \(X=-\cosh s\), whereas the right
side is regular there. This is again impossible. The claim follows.

Choose \(x\) with \(C(x)\ne0\). Equation \eqref{eq:crh-2-5} places \(J_2\) in the
restricted holonomy algebra. The traceless part of \eqref{eq:crh-2-4} is a nonzero
element of \(\operatorname{span}_{\mathbb R}\{J_1,J_3\}\); it is nonzero
because otherwise \eqref{eq:crh-2-5} would vanish. Bracketing \eqref{eq:crh-2-4} twice with \(J_2\)
isolates this traceless part, and a further bracket gives its independent
orthogonal direction. Thus the holonomy algebra contains \(\su(2)\).
Since \(P_s\) is nonconstant at every finite \(s\), the function \(u''\)
does not vanish identically. Curvature at some point therefore has
nonzero trace, and parallel transport preserves trace. The holonomy
algebra is \(\uu(2)\). The restricted holonomy is \(U(2)\), which is
also the full holonomy because the connection is unitary.

At \(s=S\), the metric is
\[
 G(\tau)=\begin{pmatrix}B&d\\d&B\end{pmatrix}.
\]
A constant holomorphic change of frame diagonalizes it to
\(\operatorname{diag}(B+d,B-d)\). Both line subbundles are therefore
Chern parallel. To show that the holonomy fills their full torus, suppose
that constants \(a,b\in\mathbb R\) satisfy
\[
 a(\log(B+d))''+b(\log(B-d))''=0.
\]
Periodicity implies \(a\log(B+d)+b\log(B-d)\) is constant. Differentiating
with respect to \(d\), which ranges over a nontrivial interval, gives
\[
 \frac{a}{B+d}-\frac{b}{B-d}=0.
\]
The constant and linear coefficients in \(d\) imply \(a=b=0\).
The curvature values therefore span the diagonal algebra \(\uu(1)^2\).
The global diagonal frame gives the matching upper bound for full
holonomy. This proves \eqref{eq:crh-2-2}.

At the limit \(p=1\), \eqref{eq:crh-2-3} is traceless, so full holonomy lies in \(SU(2)\).
Here \eqref{eq:crh-2-5} becomes
\[
 D_y\mathcal F_{xy}=-\frac{q^3}{2}J_2.
\]
Since \(d\) is nonconstant, this is nonzero somewhere. The same bracket
argument gives \(\su(2)\), completing the proof of Theorem~\ref{thm:chern-crossing}.

\subsection{Transversality and parallel tensors}
The disappearance of the non-Abelian directions at \(\tau\) is transverse.
Differentiating \eqref{eq:crh-2-6} at that time gives
\begin{equation}\label{eq:crh-2-7}
 \left.\partial_t C\right|_{\tau}
 =\frac{
  (\partial_sP_S)'d''-d'(\partial_sP_S)''
 }{16P_S\sqrt{BP_S}}.
\end{equation}
This function is not identically zero. Otherwise the interval argument
above would give \(\partial_sP_S=a d+c\). If \(a=0\), the double pole of
\[
 \partial_sP_S=\frac{1+\cosh S\,X}{(\cosh S+X)^2}
\]
is already a contradiction. If \(a\ne0\), squaring would equate a rational
function with a fourth-order pole at \(X=-\cosh S\) to one with a simple
pole there. Thus \eqref{eq:crh-2-7} is nonzero at some point.

The complex dimension of the global Chern-parallel endomorphism space is
\(1\) for every finite \(t\ne\tau\), and \(2\) at \(t=\tau\).
At \(t=\tau\) the two constant holomorphic eigenlines acquire parallel
orthogonal projections. At the limit the endomorphism dimension returns
to \(1\), while a parallel complex volume form appears. There are no
nonzero parallel vectors at any of these times. These statements concern
parallel tensors, rather than isometry groups.

\begin{remark}
The limiting metric has vanishing first Chern--Ricci tensor and
nonzero Chern curvature. This distinction also applies to the
convergence theorem of \cite{Gill}: a first-flow limit on a complex
parallelizable manifold need not be Chern-flat. Its nonzero Chern
torsion permits holonomy \(SU(2)\) on a complex torus. Finite-time
appearance of Chern determinant-flatness would instead force stationarity
on a compact smooth time slab by backward uniqueness for the first
Chern--Ricci flow. The example reaches determinant-flatness only
asymptotically.
\end{remark}


\section{Preservation under the second Chern--Ricci flow}\label{sec:second}

For possible extension, we take the following conventions. Let \(E\to M\) be a fixed holomorphic bundle over a compact complex
manifold, and let \(h(t)\) be a smooth family of Hermitian metrics.
Write the Chern curvature as
\(F_h=R_{i\bar j}\,dz^i\wedge d\bar z^j\), with
\(R_{i\bar j}=-\partial_{\bar j}(h^{-1}\partial_i h)\).
For a positive Hermitian contravariant base tensor \(b^{i\bar j}(t)\), set
\begin{equation}\label{eq:crh-7-1}
 S(h,b)=b^{i\bar j}R_{i\bar j},\qquad h^{-1}h_t=-S(h,b).
\end{equation}
The endomorphism \(S\) is \(h\)-self-adjoint.
This is the unnormalized Hermitian bundle heat equation.
For \(E=T^{1,0}M\), \(h=g\), and \(b=g^{-1}\), it is the
second Chern--Ricci flow
\begin{equation}\label{eq:crh-7-2}
 g_t=-\Ric^{(2)},\qquad
 \Ric^{(2)}_{i\bar j}=g^{k\bar l}R_{k\bar l i\bar j}.
\end{equation}
Equation \eqref{eq:crh-7-2} is strictly parabolic; see \cite{ST,BV} and
\cite[Lemma 3.1]{LeeSecond}.
It is different from first Chern--Ricci flow.

Let \(U(t):(E,h(0))\to(E,h(t))\) solve
\begin{equation}\label{eq:crh-7-3}
 U_t=\tfrac12 S\,U,\qquad U(0)=\Id .
\end{equation}
It is a complex bundle isometry. We identify all Chern connections
with unitary connections \(A(t)=U(t)^*\nabla^{h(t)}\) on \((E,h(0))\).

\begin{theorem}\label{sr-thm:main}
For a smooth second Chern--Ricci flow on a compact complex manifold,
both \(\Hol_p(A(t))\) and \(\Hol_p^0(A(t))\) are independent of time
on every smooth finite time interval, including its initial time.
The same conclusion holds for \eqref{eq:crh-7-1}, provided \(h(t)\) and the base
contraction are real-analytic in positive time with values in every
finite spatial \(C^k\) space. In particular it applies to a fixed
smooth Hermitian base metric.
\end{theorem}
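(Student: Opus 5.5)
The plan has three stages. First, a forward reduction lemma: if \(A(t_1)\) reduces to a subgroup, so does \(A(t)\) for \(t\ge t_1\). Second, positive-time analyticity, which runs that inclusion backward on \((0,\infty)\). Third, a separate limiting argument at \(t=0\). Throughout we work in the unitary gauge of \eqref{eq:crh-7-3}.

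\textbf{Evolution of the connection.} A direct computation from \(h^{-1}h_t=-S\) and \(U_t=\tfrac12 SU\) should give
\[
\partial_t A=\tfrac12\bigl(\bar\partial^{A}\widetilde S-\partial^{A}\widetilde S\bigr),\qquad \widetilde S=U^{-1}SU .
\]
Here \(\widetilde S\) is \(h(0)\)-self-adjoint. Hence \(i\widetilde S\) is a section of \(\mathfrak u(E,h(0))\), and \(\partial_tA\) is a covariant derivative of it. The key structural point is that \(S=b^{i\bar j}R_{i\bar j}\) is a base-slot trace of curvature. It therefore lies pointwise in the complexified holonomy bundle \(\mathfrak{hol}(A)\otimes\mathbb C\), and \(i\widetilde S\) lies in \(\mathfrak{hol}(A)\).

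\textbf{Step 1: forward preservation.} Suppose \(A(t_1)\) reduces to a holonomy subbundle \(Q\subset\) the unitary frame bundle of \((E,h(0))\), with structure group \(H\). I would solve the flow within the ansatz \(h=h(t_1)\exp(-\sigma)\), where \(\sigma\) takes values in the self-adjoint part of \(\mathfrak{hol}\otimes\mathbb C\) relative to \(Q\). For the second Chern--Ricci flow one keeps \(b=g^{-1}\), which is compatible with this ansatz. Within the ansatz, \(S\) stays in the reduced algebra, so the constrained equation closes. It is still strictly parabolic, so it has a short-time solution. Uniqueness for the parabolic flow (\cite{ST,BV} and \cite[Lemma 3.1]{LeeSecond} for \eqref{eq:crh-7-2}; standard theory for fixed \(b\)) identifies it with the actual solution. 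A continuation argument covers the whole interval. The conclusion is \(\Hol_p(A(t))\subset\Hol_p(A(t_1))\) and the same for \(\Hol^0\), for all \(t\ge t_1\). I do not need closedness here, only that \(Q\) is an immersed reduction; I expect this step to be the main technical obstacle.

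\textbf{Step 2: positive times, via analyticity.} For \(0<t_1<t_2\), Step 1 gives \(\mathfrak h_2:=\mathfrak{hol}_p(t_2)\subset\mathfrak{hol}_p(t_1)\). For the reverse inclusion, consider the curvature of \(A(t)\), together with all its covariant derivatives, transported to \(p\) along fixed piecewise smooth paths. By Ambrose--Singer \cite{AS}, the span of these quantities is \(\mathfrak{hol}_p(t)\). Each quantity is real-analytic in \(t>0\), because \(h(t)\) and \(U(t)\) are. Project these quantities onto \(\mathfrak h_2^\perp\) in \(\mathfrak u(E_p)\). By Step 1 the projections vanish for \(t\ge t_2\), so by analyticity they vanish on all of \((0,\infty)\). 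Hence \(\mathfrak{hol}_p(t)\) is constant for \(t>0\), and so \(\Hol^0_p\) is constant there.

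For the full group I use path transport. Fix a loop \(\gamma\) at \(p\). The time derivative of \(\Hol_\gamma(A(t))\) equals \(\Hol_\gamma(A(t))\) times \(\int_\gamma\) of the transported \(\partial_tA\). That integrand lies in \(\mathfrak{hol}_p\), which is now constant. The Lie-group ODE principle (as in \cite{CK}) then gives \(\Hol_\gamma(t)\in\Hol_\gamma(t')\,\Hol^0_p\). Since the full group is the union of these cosets over loops, it is constant as well.

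\textbf{Step 3: the initial time.} Step 1 with \(t_1=0\) gives \(\Hol(t)\subset\Hol(0)\). For the converse, note that as \(t\to0^+\) the parallel transports and all transported curvature derivatives converge smoothly. Their limits therefore lie in the fixed closed subspace \(\mathfrak h\) obtained in Step 2, which is a linear subspace even if the corresponding group is not closed. Ambrose--Singer then yields \(\mathfrak{hol}(0)\subset\mathfrak h\), so the restricted groups agree. The full groups are handled by the coset comparison of Step 2, using continuity of \(\Hol_\gamma(A(t))\) at \(t=0\).

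Finally, parallel holomorphic tensors are fixed as actual sections. Such a tensor is \(\Hol\)-invariant, hence tangent to the reduction, so \(S\) annihilates it and \(\partial_t\) of it vanishes.
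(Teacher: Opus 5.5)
Your argument is correct in outline but is organized differently from the paper's. The paper never runs a PDE reduction at positive times. Instead it shows that the operator $\Phi_t(B)=\tfrac12(\bar\partial_B-\partial_B)\,b^{i\bar j}(F_B)_{i\bar j}$ maps the affine space $A_*+\Omega^1(\mathfrak h)$ into $\Omega^1(\mathfrak h)$. Hence every Taylor coefficient $\partial_t^jA(\tau)$ lies in $\Omega^1(\mathfrak h_\tau)$, and analyticity gives the connection-level identity $A(t)-A(\tau)\in\Omega^1(\mathfrak h_\tau)$ for all $t>0$, in both time directions at once. A spatial path-transport comparison lemma then turns this into inclusions of the full and restricted groups. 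The forward reduction, your Step 1, is used only on a short interval at $t=0$, and the limit $t\downarrow0$ is taken in $A(0)-A(\tau)$.

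You instead seed analyticity with the forward reduction at each positive time, so you get vanishing on an open interval rather than vanishing of all derivatives at a point. You apply analyticity only to transported curvature, i.e.\ at the Lie-algebra level. You then recover the full group from a time-direction ODE along each fixed loop, using that $\partial_tA(t)$ is $\mathfrak{hol}(A(t))$-valued at every time, in the Cook--Kotschwar manner.

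What each route buys:
\begin{itemize}
\item Your route needs Step 1 only for Lie algebras. There, pointwise conjugation by $U_p(t)$ suffices, and both the gauge bookkeeping for $U^{-1}\nabla_0U$ and the spatial comparison lemma can be skipped.
\item The continuation in your Step 1 is unnecessary. Short-time reduction at each $t_2$, plus analyticity, already gives $\mathfrak{hol}_p(t)\subset\mathfrak{hol}_p(t_2)$ for all $t>0$, and this is symmetric in $t,t_2$.
\item The paper's route avoids existence and uniqueness arguments at positive times. It also yields the stronger identity $A(t)-A(\tau)\in\Omega^1(\mathfrak h_\tau)$, from which the statement about parallel tensors as actual sections follows at once.
\end{itemize}

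Two points must be supplied or tightened.

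\emph{Positive-time analyticity.} For the self-coupled second flow and for fixed-base bundle heat, analyticity of $h(t)$ is not a hypothesis of the theorem; you only assert it. The paper proves it by Angenent's parameter trick: the analytic implicit function theorem in parabolic H\"older spaces, applied to the time-rescaled solutions $q(s+\lambda(t-s))$.

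\emph{Full groups at $t=0$.} Do not pass to the limit in $P_\gamma(t)\in P_\gamma(t')\Hol_p^0$. Since $\Hol_p^0$ need not be closed (compare the paper's irrational-slope example), such a limit is only guaranteed to lie in the closure. Instead, run the loop ODE on the closed interval $[0,t']$. Its coefficient is continuous and, because the vector space $\mathfrak h$ is closed, $\mathfrak h$-valued up to $t=0$. The Lie-subgroup ODE principle then gives $P_\gamma(0)\in P_\gamma(t')\Hol_p^0$ directly.
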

The statement concerns the actual full group, including discrete
monodromy, not just its Lie algebra or its closure.
No closedness of the holonomy subgroup is assumed.

\subsection{A connection comparison lemma}
For a unitary connection \(A_0\), its restricted holonomy Lie algebra
defines an \(A_0\)-parallel real vector subbundle
\[
 \mathfrak h\subset\mathfrak u(E).
\]
Its fibers are closed vector subspaces, even when the corresponding
connected immersed holonomy groups are not closed.
Its curvature lies in \(\mathfrak h\) by Ambrose--Singer.
The next comparison uses the Lie-subgroup ODE principle of
\cite[Lemma 3.3]{CK}, applied along a spatial path.

\begin{lemma}\label{sr-lem:comparison}
If \(A_1-A_0\in\Omega^1(\mathfrak h)\), then
\[
 \Hol_p(A_1)\subset\Hol_p(A_0),\qquad
 \Hol_p^0(A_1)\subset\Hol_p^0(A_0).
\]
Here \(\mathfrak h\) is the holonomy Lie-algebra bundle of the reference
connection \(A_0\). The hypothesis need not hold after interchanging
\(A_0\) and \(A_1\).
\end{lemma}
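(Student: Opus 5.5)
Compare parallel transports of \(A_1\) and \(A_0\) along the same loop, and show that their ratio solves an ODE in the holonomy group of \(A_0\).

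\textbf{Setup.} Fix a piecewise smooth loop \(\gamma:[0,1]\to M\) based at \(p\). Let \(P_0(s),P_1(s):E_p\to E_{\gamma(s)}\) be the parallel transports of \(A_0\) and \(A_1\) along \(\gamma|_{[0,s]}\). Put
\[
 V(s)=P_0(s)^{-1}P_1(s)\in U(E_p).
\]
Write \(\beta=A_1-A_0\in\Omega^1(\mathfrak h)\). Differentiating \(P_1=P_0V\) and using \(\nabla^{A_0}_{\dot\gamma}P_0=0\) and \(\nabla^{A_1}_{\dot\gamma}P_1=0\) gives
\[
 V'(s)=-\,\xi(s)\,V(s),\qquad \xi(s)=P_0(s)^{-1}\beta(\dot\gamma(s))P_0(s).
\]
Because \(\mathfrak h\) is \(A_0\)-parallel, \(P_0(s)^{-1}\mathfrak h_{\gamma(s)}P_0(s)=\mathfrak h_p\). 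Hence \(\xi(s)\) lies in \(\mathfrak h_p\) for all \(s\), and \(\xi\) is piecewise continuous.

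\textbf{Lie-subgroup ODE principle.} Let \(H=\Hol^0_p(A_0)\). It is a connected Lie subgroup of \(U(E_p)\), possibly only immersed, and its Lie algebra is \(\mathfrak h_p\). Solve \(W'=-\xi W\), \(W(0)=\Id\), inside \(H\) using its intrinsic manifold topology; this is possible because \(-\xi(s)\) defines a time-dependent right-invariant vector field on \(H\). Including \(H\to U(E_p)\) turns \(W\) into a solution of the same ODE in \(U(E_p)\). Uniqueness then gives \(V=W\), so \(V(s)\in H\) for all \(s\). This is the principle of \cite[Lemma 3.3]{CK}, and it is exactly what lets us avoid any closedness assumption on \(H\). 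I expect this to be the main obstacle: one must not argue by taking limits in \(U(E_p)\), since \(H\) need not be closed, and the ODE must be solved intrinsically in \(H\) before comparing.

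\textbf{Conclusion.} At \(s=1\) the loop closes, so \(P_1(1)=P_0(1)V(1)\). Here \(P_0(1)\in\Hol_p(A_0)\) and \(V(1)\in H\subset\Hol_p(A_0)\), hence \(\Hol_p(A_1)\subset\Hol_p(A_0)\).

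For the restricted statement, take \(\gamma\) null-homotopic. Then \(P_0(1)\in\Hol^0_p(A_0)=H\), so \(P_1(1)\in H\) and \(\Hol^0_p(A_1)\subset\Hol^0_p(A_0)\).

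The hypothesis uses only \(\mathfrak h\) of \(A_0\). Interchanging the roles would require \(\beta\) to take values in the holonomy bundle of \(A_1\), which can be strictly smaller, so the argument is not symmetric.
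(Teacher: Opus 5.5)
Your proposal is correct and follows essentially the same argument as the paper. Both proofs use the comparison \(V=P_0^{-1}P_1\), the ODE \(V'=-P_0^{-1}(A_1-A_0)(\dot\gamma)P_0V\) with coefficients in \(\mathfrak h_p\) by \(A_0\)-parallelism, intrinsic integration in the possibly nonclosed connected group \(\Hol_p^0(A_0)\), and the factorization \(P_1(1)=P_0(1)V(1)\). The one step you leave implicit is that the right-invariant ODE has a solution in \(H\) on all of \([0,1]\); the paper states this explicitly, noting that bounded Lie-algebra coefficients cannot escape in a complete invariant metric (equivalently, uniform local existence plus right translation).
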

\begin{proof}
For a based path \(\gamma\), let \(P_i(s)\) be its two parallel
transports, and put \(V(s)=P_0(s)^{-1}P_1(s)\).
Then \(V(0)=\Id\) and
\[
 V'=-P_0^{-1}(A_1-A_0)(\dot\gamma)P_0\,V .
\]
Since \(\mathfrak h\) is \(A_0\)-parallel, the coefficient belongs
to the fixed Lie algebra \(\mathfrak h_p\). The solution remains
in its connected immersed matrix Lie group \(H_p^0\).
This last assertion does not require closedness: integrate the
same finite-dimensional ODE in the connected Lie group; bounded
Lie-algebra coefficients on a compact path interval prevent escape
in a complete left- or right-invariant metric.
For a loop,
\(P_1(1)=P_0(1)V(1)\in\Hol_p(A_0)\).
If the loop is contractible both factors lie in \(H_p^0\).
\end{proof}

\begin{lemma}[Analytic affine invariance]\label{sr-lem:analytic}
Let \(A(t)\) be an analytic family of unitary connections on a
connected time interval \(I\), in every finite spatial \(C^k\)
topology. Suppose it solves \(A_t=\Phi_t(A)\), where the following
holds: whenever \(\mathfrak h\) is the holonomy Lie-algebra bundle
of a fixed connection \(A_*\), the operator \(\Phi_t\) maps
\[
 A_*+\Omega^1(\mathfrak h)\quad\hbox{into}\quad\Omega^1(\mathfrak h)
\]
for every \(t\). Assume enough smooth differential dependence to
differentiate the equation to arbitrary order.
Then the full and restricted holonomy groups of \(A(t)\), in the
fixed unitary bundle, are constant.
\end{lemma}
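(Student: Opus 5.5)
Fix \(t_0\in I\) and let \(\mathfrak h\) be the holonomy Lie-algebra bundle of \(A_*=A(t_0)\). The aim is to show that \(A(t)-A(t_0)\in\Omega^1(\mathfrak h)\) for all \(t\in I\). Lemma~\ref{sr-lem:comparison} then gives
\[
 \Hol_p(A(t))\subset\Hol_p(A(t_0)),\qquad \Hol_p^0(A(t))\subset\Hol_p^0(A(t_0)).
\]
Since \(t_0\) was arbitrary, exchanging the roles of \(t\) and \(t_0\) gives the reverse inclusions, hence constancy. The hypothesis of Lemma~\ref{sr-lem:comparison} is stated only relative to the reference connection's algebra bundle. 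This is harmless here, because we get to use every time as a reference.

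The key step is an analytic Taylor argument at \(t_0\). Write \(\Pi\) for the pointwise orthogonal projection \(\mathfrak u(E)\to\mathfrak h^\perp\) with respect to the fixed metric \(-\tr(XY)\). The projection \(\Pi\) is a fixed, time-independent bundle map, and it is smooth because \(\mathfrak h\) is a smooth \(A_*\)-parallel subbundle. We prove by induction on \(m\) that
\[
 \Pi\bigl(\partial_t^m A\bigr)(t_0)=0\quad\text{for all } m\ge1.
\]
For \(m=1\), note that \(A(t_0)=A_*\in A_*+\Omega^1(\mathfrak h)\), so \(\partial_tA(t_0)=\Phi_{t_0}(A_*)\in\Omega^1(\mathfrak h)\).

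For higher \(m\), differentiate \(A_t=\Phi_t(A)\) repeatedly. The quantity \(\partial_t^{m+1}A(t_0)\) is a sum of terms built from the \(t\)-derivatives of \(\Phi\) at \(A_*\) and the Fréchet derivatives \(D^k\Phi_{t_0}(A_*)\), evaluated on the lower derivatives \(\partial_t^jA(t_0)\in\Omega^1(\mathfrak h)\). The hypothesis says that \(\Phi_t\) maps the affine space \(A_*+\Omega^1(\mathfrak h)\) into its model space \(\Omega^1(\mathfrak h)\) for every \(t\). Differentiating this inclusion along curves inside the affine space gives:
\begin{itemize}
 \item \(\partial_t^a\Phi_t(A_*)\in\Omega^1(\mathfrak h)\);
 \item \(D^k\Phi_t(A_*)[v_1,\dots,v_k]\in\Omega^1(\mathfrak h)\) whenever all \(v_i\in\Omega^1(\mathfrak h)\).
\end{itemize}
To justify the second item, use polarization on the curves \(A_*+\sum\epsilon_iv_i\). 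This uses only the assumed smooth differential dependence, together with the fact that \(\Omega^1(\mathfrak h)\) is closed in each \(C^k\). The Faà di Bruno expansion then places \(\partial_t^{m+1}A(t_0)\) in \(\Omega^1(\mathfrak h)\).

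Next, the map \(t\mapsto\Pi(A(t)-A_*)\) is real-analytic on \(I\) with values in each \(C^k\), and all its derivatives vanish at \(t_0\). Analyticity on the connected interval \(I\) then makes it vanish identically. For an interior \(t_0\) this is immediate. If \(t_0\) is an endpoint, such as the initial time, we need analyticity up to that endpoint; this is where the hypothesis of analyticity on all of \(I\) is used. It follows that \(A(t)-A(t_0)\in\Omega^1(\mathfrak h)\) for every \(t\in I\), and the comparison lemma concludes the proof as described in the first paragraph.

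The main obstacle is justifying the derivative-closure step. The operator \(\Phi\) is a differential operator on connections: in the application, roughly \(A\mapsto\) a contraction of covariant derivatives of curvature, composed with the coupling \(b(t)\). Its linearizations therefore lose derivatives, so we must work in the graded family of \(C^k\) spaces rather than in a single Banach space. The first thing to try is to fix \(k\), view \(\Phi_t\) as a smooth map \(C^{k+\ell}\to C^k\), and run the induction with a loss of \(\ell\) derivatives at each step. This is harmless because every finite \(C^k\) topology is available. The affine hypothesis is needed only on \(A_*+\Omega^1(\mathfrak h)\) intersected with smooth forms, and the induction only evaluates \(\Phi\) and its derivatives there.
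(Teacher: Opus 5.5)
Your proposal is correct and follows the paper's proof. You fix a reference time, use the affine invariance and the chain rule to put every \(\partial_t^jA(t_0)\) in \(\Omega^1(\mathfrak h)\), and kill \(\Pi(A(t)-A(t_0))\) by the analytic identity theorem with the fixed projection. You then apply Lemma~\ref{sr-lem:comparison} in both directions. Your Fa\`a di Bruno bookkeeping spells out the paper's one-line inductive step; it should also include the mixed derivatives \(\partial_t^aD^k\Phi_t(A_*)\), which follow in the same way because the affine inclusion holds for every \(t\).
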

\begin{proof}
Fix \(\tau\in I\), take \(A_*=A(\tau)\), and let \(\mathfrak h\)
be its holonomy algebra bundle.
The affine invariance and the chain rule show inductively that
\(\partial_t^j A(\tau)\in\Omega^1(\mathfrak h)\) for every \(j\ge1\).
Use the fixed orthogonal projection \(Q\) onto \(\mathfrak h^\perp\).
Analyticity gives \(Q(A(t)-A(\tau))=0\) near \(\tau\), and the
analytic identity theorem extends this to all \(I\).
Lemma~\ref{sr-lem:comparison} gives
\(\Hol(A(t))\subset\Hol(A(\tau))\), and similarly for restricted
groups. Interchange \(t,\tau\) to obtain equality.
\end{proof}

\subsection{The evolving unitary connection}
Variation of the Chern connection in a fixed holomorphic frame gives
(compare the proof of \cite[Lemma 3.3]{LeeSecond})
\[
 (\nabla^h)^{1,0}_t=-\partial_{\nabla^h}S,\qquad
 (\nabla^h)^{0,1}_t=0.
\]
To obtain the evolution in the fixed unitary bundle, write
\(\Gamma(t)\) for the connection form of \(\nabla^{h(t)}\) in a
fixed holomorphic frame. The pullback connection has local form
\[
 A=U^{-1}\Gamma U+U^{-1}dU.
\]
Differentiating this identity gives
\[
 A_t=U^{-1}\Gamma_tU+d_A(U^{-1}U_t).
\]
By \eqref{eq:crh-7-3}, \(U^{-1}U_t=\tfrac12\widetilde S\), where
\(\widetilde S=U^{-1}SU\). Since
\(U^{-1}\Gamma_tU=-\partial_A\widetilde S\), we obtain
\begin{equation}\label{eq:crh-7-4}
 A_t=\tfrac12\bigl(\bar\partial_A-\partial_A\bigr)\widetilde S,
 \qquad
 \widetilde S=U^{-1}SU=b^{i\bar j}(R_A)_{i\bar j}.
\end{equation}
The base type decomposition is fixed, so \eqref{eq:crh-7-4} is intrinsic even though
the pulled-back bundle Dolbeault operator varies.

We now verify the hypotheses of Lemma~\ref{sr-lem:analytic}
for this equation and explain how its conclusion gives
Theorem~\ref{sr-thm:main} at positive times.

\smallskip
\noindent\emph{The operator and its affine invariance.}
Regard \(b(t)\) as a prescribed family of base coefficients.
For the second flow, this is the family \(g(t)^{-1}\) belonging
to the given solution; when testing the operator on other
connections, these coefficients are kept fixed.
For an arbitrary \(h(0)\)-unitary connection \(B\), define
\[
 S_t(B)=b^{i\bar j}(t)(F_B)_{i\bar j},\qquad
 \Phi_t(B)=\tfrac12(\bar\partial_B-\partial_B)S_t(B).
\]
Here \((F_B)_{i\bar j}\) denotes the mixed curvature component.
Equation \eqref{eq:crh-7-4} is exactly \(A_t=\Phi_t(A)\).
The operator \(\Phi_t\) is a smooth differential operator of
order two in \(B\); locally it depends on the connection
coefficients and their first two spatial derivatives, as well
as on the prescribed coefficients \(b(t)\) and their derivatives.
Thus its dependence has the differentiability required in
Lemma~\ref{sr-lem:analytic}.

Fix a unitary connection \(A_*\) and let \(\mathfrak h\) be its
holonomy Lie-algebra bundle. If \(B=A_*+a\) with
\(a\in\Omega^1(\mathfrak h)\), then
\[
 F_B=F_{A_*}+d_{A_*}a+a\wedge a
       \in\Omega^2(\mathfrak h).
\]
Indeed, \(F_{A_*}\) is \(\mathfrak h\)-valued by Ambrose--Singer,
\(d_{A_*}a\) is \(\mathfrak h\)-valued because the subbundle is
\(A_*\)-parallel, and \(a\wedge a\) is \(\mathfrak h\)-valued
because its fibers are closed under Lie brackets.
Moreover, for a section \(w\) of \(\mathfrak h\),
\[
 \nabla_Bw=\nabla_{A_*}w+[a,w]\in\Omega^1(\mathfrak h).
\]
Hence \(B\) also preserves \(\mathfrak h\) and its complexification.
It follows successively that
\[
 S_t(B)\in\Gamma(\mathfrak h_{\mathbb C}),\qquad
 \Phi_t(B)\in\Omega^1(\mathfrak h_{\mathbb C}).
\]
Since \(B\) is unitary and \(b\) is Hermitian, \(S_t(B)\) is
self-adjoint. On real tangent vectors,
\((\bar\partial_B-\partial_B)S_t(B)\) is skew-adjoint.
Consequently
\[
 \Phi_t(B)\in\Omega^1(\mathfrak h_{\mathbb C}\cap\mathfrak u(E))
             =\Omega^1(\mathfrak h).
\]
This is precisely the affine invariance in Lemma~\ref{sr-lem:analytic}.
The time dependence of \(b(t)\) causes no additional Lie-algebra
directions: its coefficients and all their derivatives act as
scalars on the endomorphism factor.

\smallskip
\noindent\emph{Analyticity of the actual connection family.}
For \eqref{eq:crh-7-2}, the local metric equation has principal term
\[
 g^{k\bar l}\partial_k\partial_{\bar l}g_{i\bar j}
\]
and lower-order terms analytic in \(g,g^{-1},\partial g\).
The fixed-base bundle equation has the same structure with
\(b^{k\bar l}\) in place of \(g^{k\bar l}\). Write either of
these autonomous metric equations as \(q_t=\mathcal Q(q)\).
Fix \(t_*>0\) and a short slab \([s,s+\delta]\) containing
\(t_*\) in its interior, where \(s>0\). On parabolic H\"older spaces,
the map
\[
 (\lambda,q)\longmapsto
 \bigl(q_t-\lambda\mathcal Q(q),\ q(s)-q_{\mathrm{given}}(s)\bigr)
\]
is analytic near the given solution and \(\lambda=1\).
For example, for each integer \(k\ge0\) and \(0<\alpha<1\),
its solution space is \(C^{k+2+\alpha,(k+2+\alpha)/2}\), and its
target is the product of \(C^{k+\alpha,(k+\alpha)/2}\) with
the initial-trace space \(C^{k+2,\alpha}\).
The derivative in \(q\) is
\[
 w\longmapsto
 \bigl(w_t-D\mathcal Q_{q(t)}w,\ w(s)\bigr),
\]
a bounded isomorphism by the linear parabolic Schauder theorem.
The analytic implicit function theorem therefore makes the
solution \(q_\lambda\) analytic in \(\lambda\). Uniqueness gives
\(q_\lambda(t)=q(s+\lambda(t-s))\), after shortening the slab
so that all rescaled times remain in the original solution interval.
Evaluation at \(t_*\) proves time analyticity in every finite
spatial H\"older norm; see \cite{Angenent} for this parameter method.
This argument requires smooth, rather than spatially analytic,
fixed coefficients.

Thus \(h(t)\), \(b(t)\), and \(S(t)\) are analytic at positive
times for the second flow and for fixed-base bundle heat.
For the general bundle family in Theorem~\ref{sr-thm:main},
this follows from its stated analyticity hypotheses.
The linear ODE \eqref{eq:crh-7-3} then implies that \(U(t)\)
is analytic at positive times in every finite spatial \(C^k\) norm.
The formula \(A=U^{-1}\Gamma U+U^{-1}dU\) shows that \(A(t)\)
has the same property. All these connections are unitary for
the single fixed metric \(h(0)\).

\smallskip
\noindent\emph{Theorem 3.1 from Lemma~\ref{sr-lem:analytic}.}
Apply the lemma on the connected interval \(I=(0,T)\).
To make its group conclusion explicit, fix \(\tau\in I\) and
write \(\mathfrak h_\tau\) for the holonomy Lie-algebra bundle
of \(A(\tau)\). The affine-invariance argument in the lemma
puts every time derivative \(\partial_t^jA(\tau)\), \(j\ge1\),
in \(\Omega^1(\mathfrak h_\tau)\). If \(Q_\tau\) denotes
the fixed orthogonal projection onto \(\mathfrak h_\tau^\perp\),
then the analytic function \(Q_\tau(A(t)-A(\tau))\) has all
derivatives zero at \(\tau\). It vanishes near \(\tau\), and
the analytic identity theorem extends this equality to all of \(I\).
In particular,
\[
 A(t)-A(\tau)\in\Omega^1(\mathfrak h_\tau)
 \qquad(t,\tau>0).
\]
Lemma~\ref{sr-lem:comparison} now gives actual group inclusions
\[
 \Hol_p(A(t))\subset\Hol_p(A(\tau)),\qquad
 \Hol_p^0(A(t))\subset\Hol_p^0(A(\tau)).
\]
Repeating the same argument with \(t\) and \(\tau\) interchanged
gives both reverse inclusions, hence equality of the full groups
and of the restricted groups. Finally,
\[
 \Hol_p(A(t))
 =U_p(t)^{-1}\Hol_p(\nabla^{h(t)})U_p(t),
\]
with the same identity for restricted holonomy. This proves
Theorem~\ref{sr-thm:main} at positive times under the stated
unitary identification. The next subsection supplies the two
inclusions needed to include \(t=0\).

\subsection{Including the initial time}
We give the forward reduction argument explicitly, to avoid deducing
initial-time equality from continuity alone.

\begin{lemma}\label{sr-lem:forward}
For the bundle metric equation \eqref{eq:crh-7-1}, with any prescribed smooth
positive base contraction \(b(t)\), the initial full holonomy is
an upper bound for the holonomy of the pulled-back connection \(A(t)\)
on a sufficiently short time interval.
\end{lemma}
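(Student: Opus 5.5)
The plan is to show that the pulled-back connection stays in the affine space \(A(0)+\Omega^1(\mathfrak h_0)\) for short time. Here \(\mathfrak h_0\) is the holonomy Lie-algebra bundle of \(A(0)=\nabla^{h(0)}\). Once this is known, Lemma~\ref{sr-lem:comparison} with \(A_0=A(0)\) and \(A_1=A(t)\) gives \(\Hol_p(A(t))\subset\Hol_p(A(0))\) and \(\Hol^0_p(A(t))\subset\Hol^0_p(A(0))\) directly. Analyticity is unavailable at \(t=0\), so the membership cannot come from Taylor expansion. Instead I will solve a \emph{reduced} flow inside the reduction and invoke uniqueness of the parabolic metric equation. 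Concretely, the parallel subbundle \(\mathfrak h_0\) corresponds to a reduction of the unitary frame bundle to a (possibly nonclosed) subgroup. On the complex side it corresponds to the complexification \(\mathfrak h_{0,\mathbb C}\), which is a holomorphic, \(\nabla^{h(0)}\)-parallel subbundle of \(\operatorname{End}E\).

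Step one is to pose the ansatz \(h(t)=h(0)\,e^{-\sigma(t)}\), where \(\sigma(t)\) is a section of the \(h(0)\)-self-adjoint part \(i\mathfrak h_0\) of \(\mathfrak h_{0,\mathbb C}\). Equivalently, I write \(h(t)=h(0)\,k(t)\) with \(k\) valued in the positive part of the complexified holonomy group. Since \(\mathfrak h_{0,\mathbb C}\) is preserved by \(\partial_{\nabla^{h(0)}}\) and \(\bar\partial\), the Chern connection of such an \(h\) differs from \(\nabla^{h(0)}\) by \(k^{-1}\partial_{h(0)}k\in\Omega^{1,0}(\mathfrak h_{0,\mathbb C})\). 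Its curvature is therefore \(\mathfrak h_{0,\mathbb C}\)-valued, and so is \(S=b^{i\bar j}R_{i\bar j}\). Hence \(h^{-1}h_t=-S\) is a strictly parabolic equation for \(\sigma\) taking values in the fixed finite-rank bundle \(i\mathfrak h_0\). Standard short-time theory for quasilinear parabolic systems on a compact manifold gives a smooth solution on \([0,\varepsilon)\). For the second flow \(b=g^{-1}\) depends on \(h\) itself; this only changes lower-order coupling, and the same short-time theory applies. Uniqueness of smooth solutions of \eqref{eq:crh-7-1}, or of \eqref{eq:crh-7-2} by strict parabolicity, then identifies the reduced solution with the given \(h(t)\) on \([0,\varepsilon)\). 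In particular \(S(t)\) is \(\mathfrak h_{0,\mathbb C}\)-valued.

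Step two transfers this to the unitary picture. The ODE \eqref{eq:crh-7-3} has coefficient \(\tfrac12 S\) in \(\mathfrak h_{0,\mathbb C}\cap\{\text{self-adjoint}\}\), so \(U(t)\) lies pointwise in the complexified structure group. Then \(A=U^{-1}\Gamma U+U^{-1}dU\), and the local formula \eqref{eq:crh-7-4}, show that \(A_t=\Phi_t(A)\) lies in \(\Omega^1(\mathfrak h_{0,\mathbb C})\cap\Omega^1(\mathfrak u(E))=\Omega^1(\mathfrak h_0)\). This uses the affine invariance already verified. The argument requires \(A(t)\in A(0)+\Omega^1(\mathfrak h_0)\) at earlier times, which I obtain by a continuity/bootstrap argument. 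Alternatively I will argue directly: \(A(t)-A(0)=\int_0^t\Phi_s(A(s))\,ds\), with each integrand in \(\mathfrak h_0\) because \(A(s)\) is the Chern connection of a reduced metric pulled back by a reduced \(U\). The latter is a gauge transformation preserving \(\mathfrak h_0\) only up to conjugation, and this needs checking.

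The main obstacle is that \(\mathfrak h_0\) may come from a nonclosed group, together with the accompanying gauge bookkeeping. Its fibers are closed subspaces, but the set of positive elements of the complexified group need not be a closed submanifold. For this reason I parametrize by \(\sigma\) in the linear bundle \(i\mathfrak h_0\) rather than by group elements, which avoids the issue. I must also verify that conjugation by \(U(t)\) preserves \(\mathfrak h_0\). I expect this to follow because \(U\) is generated by \(\mathfrak h_{0,\mathbb C}\)-valued \(S\) and \([\mathfrak h_{0,\mathbb C},\mathfrak h_0]\subset\mathfrak h_{0,\mathbb C}\), combined with unitarity; this is the step I would scrutinize most carefully.
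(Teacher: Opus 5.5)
Your Step one is essentially the paper's argument: write $h=h_0e^{v}$ with $v$ a section of $i\mathfrak h_0$, observe that $S(h,b)$ is $\mathfrak h_{0,\mathbb C}$-valued, solve the reduced strictly parabolic equation in that subbundle, and identify it with the actual flow by uniqueness. To close the equation in $i\mathfrak h_0$ you should add two checks. First, the flow is $v_t=-\mathcal E_v^{-1}S$, and $\mathcal E_v^{-1}$ is an analytic function of $\operatorname{ad}_v$, so it preserves $\mathfrak h_{0,\mathbb C}$. Second, the right side is $h_0$-self-adjoint.

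The genuine gap is Step two: passing from ``$h(t)$ is reduced'' to $A(t)-A(0)\in\Omega^1(\mathfrak h_0)$.

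\begin{itemize}
\item The bootstrap through $A_t=\Phi_t(A)$ is circular. Affine invariance gives $\Phi_t(A(t))\in\Omega^1(\mathfrak h_0)$ only at times where the membership is already known. So it shows at most that the set of good times is closed, not open. Openness would need a separate uniqueness argument for the gauge-degenerate connection equation, which you do not supply.
\item Your direct route rests on the claim that conjugation by $U(t)$ preserves $\mathfrak h_0$. This is false in general. $U(t)$ is an isometry $(E,h_0)\to(E,h(t))$, not an $h_0$-unitary endomorphism. For example, if $\mathfrak h_0=\uu(E,h_0)$, then $\operatorname{Ad}_{U(t)}$ maps it onto $\uu(E,h(t))$, which differs from $\uu(E,h_0)$ unless $h(t)$ is pointwise proportional to $h_0$.
\item Conjugation also says nothing about the inhomogeneous term $U^{-1}dU$. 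Knowing that $U(t,x)$ lies pointwise in the complexified group does not by itself make $U^{-1}\nabla_0U$ an $\mathfrak h_{0,\mathbb C}$-valued form.
\end{itemize}

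The missing idea is to work with $\mathfrak h_{0,\mathbb C}$ throughout, and to apply unitarity to the connections rather than to $U$. Step one gives $\nabla^{h(t)}=\nabla_0+\alpha(t)$ with $\alpha\in\Omega^{1,0}(\mathfrak h_{0,\mathbb C})$, so
\[
 A(t)-A(0)=U^{-1}\alpha U+U^{-1}\nabla_0U .
\]
The first term is $\mathfrak h_{0,\mathbb C}$-valued. Indeed, $\operatorname{Ad}_{U}$ solves a linear ODE driven by $\tfrac12\operatorname{ad}_S$, and this operator preserves $\mathfrak h_{0,\mathbb C}$ by bracket closure. For the second term the paper uses the identity
\[
 \partial_t\bigl(U^{-1}\nabla_0U\bigr)=\tfrac12U^{-1}(\nabla_0S)U .
\]
Its right side is $\mathfrak h_{0,\mathbb C}$-valued because that subbundle is $\nabla_0$-parallel, and the left quantity vanishes at $t=0$. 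Hence $A(t)-A(0)\in\Omega^1(\mathfrak h_{0,\mathbb C})$. Since $A(t)$ and $A(0)$ are both $h_0$-unitary, their difference is skew-adjoint and so lies in $\mathfrak h_{0,\mathbb C}\cap\uu(E)=\mathfrak h_0$. Lemma~\ref{sr-lem:comparison} then finishes the proof. This route also avoids any group-level statement about the possibly nonclosed complexified group.
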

\begin{proof}
Let \(h_0=h(0)\), \(\nabla_0=\nabla^{h_0}\), and let
\(\mathfrak h_0\) be its holonomy Lie-algebra bundle.
For a small \(h_0\)-self-adjoint endomorphism \(v\), put
\(h=h_0e^v\), and define
\[
 \mathcal E_v(w)=e^{-v}(d\exp)_v(w)
               =\int_0^1 e^{-sv}w e^{sv}\,ds.
\]
For \(v\) sufficiently small this is invertible.
If \(v\) is a section of \(i\mathfrak h_0\), then
\begin{equation}\label{eq:crh-7-5}
 \nabla^{h,1,0}
 =\nabla_0^{1,0}+e^{-v}\partial_{\nabla_0}e^v .
\end{equation}
Since \(\mathfrak h_{0,\mathbb C}\) is \(\nabla_0\)-parallel and
closed under commutators, \eqref{eq:crh-7-5} has
\(\mathfrak h_{0,\mathbb C}\)-valued difference, and its curvature
is likewise \(\mathfrak h_{0,\mathbb C}\)-valued.
Thus \(S(h,b)\in\mathfrak h_{0,\mathbb C}\).

The logarithmic metric equation is
\begin{equation}\label{eq:crh-7-6}
 v_t=-\mathcal E_v^{-1}S(h_0e^v,b).
\end{equation}
The operator \(\mathcal E_v\) and its local inverse preserve
\(\mathfrak h_{0,\mathbb C}\), since they are analytic functions
of \(\operatorname{ad}_v\). Moreover,
\(S(h,b)\) is self-adjoint for \(h=h_0e^v\). Thus
\(-S(h,b)\) represents a tangent vector to the space of positive
Hermitian metrics; applying the differential of the logarithm
makes \(-\mathcal E_v^{-1}S(h_0e^v,b)\) self-adjoint for \(h_0\).
The right side of \eqref{eq:crh-7-6} therefore lies in
\[
 \mathfrak h_{0,\mathbb C}\cap
       \{h_0\text{-self-adjoint endomorphisms}\}=i\mathfrak h_0.
\]
Its principal part is
\(b^{i\bar j}\partial_i\partial_{\bar j}v\);
the two factors \(\mathcal E_v\) cancel in the highest derivatives.
Replacing coordinate derivatives by covariant derivatives of the
induced connection \(\nabla_0\) changes only lower-order terms.
In real coordinates, the principal operator can thus be written as
\(a^{\alpha\beta}\nabla_{0,\alpha}\nabla_{0,\beta}v\), where
\(a^{\alpha\beta}\) is the positive definite real principal symbol
of \(b^{i\bar j}\partial_i\partial_{\bar j}\).
Since \(\nabla_0\) preserves \(i\mathfrak h_0\), this covariant
principal operator acts on sections of that subbundle.
It is therefore a strictly parabolic equation on the real vector
bundle \(i\mathfrak h_0\).
The subbundle \(i\mathfrak h_0\) is a smooth real vector bundle.
In local frames, restriction of \eqref{eq:crh-7-6} has the same scalar
uniformly parabolic principal symbol and smooth lower-order
coefficients. Standard short-time theory therefore gives a
solution in this subbundle. It is also a solution of the full
metric equation, whose uniqueness shows that the actual
solution, initially \(v=0\), takes values in \(i\mathfrak h_0\).

Since \(S(t)\in\mathfrak h_{0,\mathbb C}\), the ODE \eqref{eq:crh-7-3} shows
that conjugation by \(U(t)\) preserves the complex Lie-algebra subbundle
\(\mathfrak h_{0,\mathbb C}\).
This is a linear ODE on endomorphisms, using closure under brackets.
There is also a direct derivative identity
\[
 \partial_t\bigl(U^{-1}\nabla_0 U\bigr)
       =\tfrac12 U^{-1}(\nabla_0S)U.
\]
The right side takes values in \(\mathfrak h_{0,\mathbb C}\)
because the subbundle is \(\nabla_0\)-parallel, and the left
quantity starts at zero. Thus \(U^{-1}\nabla_0U\) is valued in
the same subbundle. Gauge-transforming \eqref{eq:crh-7-5} now gives
\(A(t)-A(0)\in\Omega^1(\mathfrak h_{0,\mathbb C})\).
As both connections are \(h_0\)-unitary, this difference belongs
to \(\Omega^1(\mathfrak h_0)\).
Lemma~\ref{sr-lem:comparison} proves the assertion. The gauge step
uses only a closed vector subbundle, and makes no closed-group assumption.
\end{proof}

In the self-coupled equation \eqref{eq:crh-7-2}, apply this lemma with the actual
smooth coefficients \(b(t)=g(t)^{-1}\) regarded as prescribed.
The actual metric and the restricted solution solve the same bundle
metric equation with these coefficients, so uniqueness still applies.

To finish Theorem~\ref{sr-thm:main}, fix \(\tau>0\).
The positive-time argument gives
\(A(t)-A(\tau)\in\Omega^1(\mathfrak h_\tau)\) for all \(t>0\).
Taking \(t\downarrow0\), smoothness of \(A\) and closedness of
the finite-dimensional fibers of \(\mathfrak h_\tau\) give
\[
 A(0)-A(\tau)\in\Omega^1(\mathfrak h_\tau).
\]
Applying Lemma~\ref{sr-lem:comparison} with reference connection
\(A(\tau)\), we obtain
\[
 \Hol_p(A(0))\subset\Hol_p(A(\tau)),\qquad
 \Hol_p^0(A(0))\subset\Hol_p^0(A(\tau)).
\]
For the reverse inclusions, choose a sufficiently small
\(\varepsilon>0\) in the interval of Lemma~\ref{sr-lem:forward}.
That lemma and its connection-difference proof give
\(\Hol_p(A(\varepsilon))\subset\Hol_p(A(0))\), also for
restricted holonomy. Positive-time constancy gives
\[
 \Hol_p(A(\tau))=\Hol_p(A(\varepsilon))\subset\Hol_p(A(0)),
\]
and the same chain for \(\Hol_p^0\). Combining the two directions
proves the theorem including the initial time. The limiting step
uses closed vector spaces in the Lie-algebra bundle; no closedness
of the holonomy groups is needed.

\begin{corollary}[Actual holomorphic parallel tensors]
For the flows in Theorem~\ref{sr-thm:main}, the space of parallel
sections in every tensor bundle formed from \(E\) and \(E^*\)
using complex tensor products, symmetric powers, or exterior powers
is constant as a space of actual sections. The statement also
holds on the universal cover. Conjugate tensor factors are excluded
from this assertion of actual, rather than gauge-transported, equality.
\end{corollary}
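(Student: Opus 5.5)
The plan is to pass from holonomy to parallel sections through the holonomy principle. Then I will show that the subspace of parallel tensors in the fixed unitary bundle is constant, and undo the gauge by checking that $U(t)$ acts trivially on such tensors. Fix a tensor bundle $\mathcal T$ built from $E$ and $E^*$ by complex tensor, symmetric and exterior operations. The key point is that no conjugate factors occur. Then $\mathcal T$ is a holomorphic bundle, the metric $h(t)$ induces a Hermitian metric on it, and its Chern connection is the induced connection of $\nabla^{h(t)}$. A section $\sigma$ is $\nabla^{h(t)}$-parallel exactly when $\sigma(p)$ is fixed by $\Hol_p(\nabla^{h(t)})$ acting through the induced representation $\rho$, and $\sigma$ is obtained from $\sigma(p)$ by parallel transport. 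On the universal cover the same holds with $\Hol_p^0$ replacing $\Hol_p$.

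\emph{Step 1: transport along $U$.} Since $U(t)$ is built fiberwise from $E$ alone, it induces $\rho(U(t))$ on $\mathcal T$, and $\rho(U(t))^{-1}$ maps $\nabla^{h(t)}$-parallel sections bijectively onto $A(t)$-parallel sections. By Theorem~\ref{sr-thm:main} the groups $\Hol_p(A(t))$ and $\Hol_p^0(A(t))$ are constant. Hence the fixed-vector space $W\subset\mathcal T_p$ of the induced representation is constant.

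\emph{Step 2: the transported sections are constant.} Showing that $W$ is constant is not enough; we also need the parallel sections themselves to be constant in $t$. For this I would use the Lie-algebra form of the argument. From the proofs above, $A(t)-A(\tau)\in\Omega^1(\mathfrak h_\tau)$ for all $t,\tau\ge0$. Moreover $\mathfrak h_\tau$ acts trivially, through $d\rho$, on every $A(\tau)$-parallel section $\sigma$, because $d\rho(\mathfrak h_\tau)$ annihilates $\sigma$ by Ambrose--Singer. So if $\nabla^{A(\tau)}\sigma=0$, then $\nabla^{A(t)}\sigma=d\rho(A(t)-A(\tau))\sigma=0$. The same $\sigma$ is therefore parallel for all $t$ as an actual section of $\mathcal T$ over the fixed unitary bundle.

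\emph{Step 3: remove the gauge.} It remains to show $\rho(U(t))\sigma=\sigma$. From \eqref{eq:crh-7-3},
\[
\partial_t\bigl(\rho(U)\sigma\bigr)=\tfrac12\,d\rho(S)\,\rho(U)\sigma .
\]
The endomorphism $\widetilde S=b^{i\bar j}(R_A)_{i\bar j}$ takes values in $\mathfrak h_{\mathbb C}$, and $d\rho(\mathfrak h_{\mathbb C})$ kills $\sigma$ by complex linearity. Equivalently, $d\rho(S)$ kills $\rho(U)\sigma$, since $S=U\widetilde SU^{-1}$. By ODE uniqueness, $\rho(U(t))\sigma=\sigma$. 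The $\nabla^{h(t)}$-parallel sections therefore coincide with $\sigma$ itself, and the parallel spaces are constant as actual sections.

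The main obstacle is Step 3, and it explains the exclusion of conjugate factors. On $\bar E$, the induced action of $U_t=\tfrac12 SU$ involves $\bar S$, which acts through the conjugate-linear extension. Complex linearity of $d\rho$ on $\mathfrak h_{\mathbb C}$ then fails: $iX$ and $X$ act differently, and the self-adjoint part $S\in i\mathfrak h$ need not annihilate an invariant vector of a conjugate factor. So I would check carefully that for purely holomorphic factors, annihilation by $\mathfrak h$ implies annihilation by $\mathfrak h_{\mathbb C}$. The universal-cover version is identical, using restricted holonomy and the restricted statement of Theorem~\ref{sr-thm:main}.
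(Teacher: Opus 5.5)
Your proposal is correct and is essentially the paper's proof. The connection-difference identity \(A(t)-A(\tau)\in\Omega^1(\mathfrak h_\tau)\) makes the same tensor parallel for every \(A(t)\), and the right side of the gauge ODE \(\partial_t(\rho(U)\sigma)=\tfrac12\rho(S)\rho(U)\sigma\) vanishes because \(\widetilde S\in\mathfrak h_{\mathbb C}\); this is the paper's observation that the curvature contraction annihilates a parallel tensor in a complex-linear representation. Your Step 1 is not needed.

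One small correction to your closing aside on conjugate factors. A single conjugate factor \(\bar E\) causes no trouble, since its parallel sections are conjugates of parallel sections of \(E\). The genuine failure occurs for mixed tensors, such as the metric \(h(t)\): it is a \(\nabla^{h(t)}\)-parallel section of \(E^*\otimes\bar E^*\) that moves with \(t\).
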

\begin{proof}
In the fixed unitary bundle, \(A(t)-A(\tau)\) takes values in the
holonomy Lie algebra of \(A(\tau)\). It therefore annihilates every
\(A(\tau)\)-parallel tensor, so the same tensor is parallel for
every \(A(t)\). Undoing \(U(t)\) gives a parallel tensor \(s(t)\)
for \(\nabla^{h(t)}\), with
\(s_t=\tfrac12\rho(S)s\), where \(\rho\) is the infinitesimal
complex tensor representation. Curvature annihilates a parallel
tensor, hence \(\rho(S)s=b^{i\bar j}\rho(R_{i\bar j})s=0\).
Thus \(s(t)\) is actually independent of time. Lift the same
connection-difference identity to the universal cover for the
corresponding assertion there.
\end{proof}

\begin{remark}[The group and the holonomy of a fixed loop]
Constancy of the group does not identify individual loop
transports at different times. On the flat elliptic curve
\(X=\mathbb C/(2\pi\mathbb Z+2\pi i\mathbb Z)\), take the
trivial line-bundle metric \(h=e^f\), where
\[
 f(t,x)=e^{-t/4}\cos x+e^{-t}\cos(2x).
\]
With \(g_{z\bar z}=1\), bundle heat is \(f_t=f_{xx}/4\),
so this is an immortal solution. Its Chern form is
\(A=\partial f=(f_x/2)\,dz\). At a point with
\(\cos x_*=-1/4\) and \(0<x_*<\pi\), the vertical loop has
\[
 P_\gamma(t)=\exp(-i\pi f_x(t,x_*)),\qquad
 f_x(t,x_*)=\frac{\sqrt{15}}4(e^{-t}-e^{-t/4}).
\]
It has transport \(1\) at \(t=0\) and nonidentity transport
at small positive time. The constant loop at the same point
has transport \(1\) at every time. Thus the rule
\(P_\gamma(0)\mapsto P_\gamma(t)\) need not be well-defined.
Nevertheless \(f_{xx}\not\equiv0\) at every finite time, so
full and restricted holonomy are always \(U(1)\).
The comparison lemma above requires no such identification
of loop representatives.
\end{remark}

\subsection{An explicit example with nonclosed restricted holonomy}
On a complex two-torus, let
\[
 P_s(x)=\frac{\sinh s}{\cosh s+\cos x},\quad s=s_0+t/4,
 \qquad g(t)=\diag(P_s,P_s^\alpha),
\]
where \(s_0>0\) and \(\alpha\) is irrational.
Writing \(u=\log P_s\), the identity
\(\partial_sP_s=u''\) gives
\(u_t=e^{-u}u''/4\). Both diagonal entries therefore satisfy
the second Chern--Ricci equation \eqref{eq:crh-7-2}.
In the global unitary frame,
\[
 A=\tfrac i2\diag(1,\alpha)u'\,dy^1.
\]
Since \(u''\not\equiv0\) at every finite time, full and restricted
Chern holonomy are exactly
\[
 \{\diag(e^{i\theta},e^{i\alpha\theta}):\theta\in\mathbb R\}.
\]
This one-dimensional immersed group is dense in \(U(1)^2\).
The metric converges smoothly, with uniform all-order bounds,
to a flat metric as \(t\to\infty\). Its limiting holonomy is trivial.
The preservation and convergence of this diagonal ansatz fit the
general result of \cite[Proposition 6.1]{BV}; the calculation above
specifies its exact nonclosed holonomy at finite times.
Thus the theorem preserves the actual nonclosed group at finite
times and makes no assertion about equality with the limiting group.

\section{Positive mean curvature with prescribed holonomy}\label{sec:mean}
This section applies Theorem~\ref{sr-thm:main} to the following
question: does requiring a metric to retain a prescribed Chern
holonomy group restrict the best lower bound for its mean curvature?
The base metric is fixed, and only the metric on the holomorphic
bundle evolves. The two corollaries below show that the optimal
value can still be approached within every realizable holonomy class.

Let \((M,\omega)\) now be compact connected K\"ahler and let \(E\) be a
holomorphic bundle. Define degree using
\(\int_M c_1(E)\wedge\omega^{n-1}/(n-1)!\), put
\(V=\int_M\omega^n/n!\), and let \(\mu_{\min}(E,\omega)\)
be its minimal Harder--Narasimhan slope. Set
\(\nu_{\min}=2\pi\mu_{\min}/V\).

\begin{corollary}\label{sr-cor:mean-positive}
If \(\mu_{\min}(E,\omega)>0\), then from any prescribed smooth
initial metric \(h_0\), the fixed-base bundle heat flow eventually
has strictly positive mean curvature
\(S(t)=\sqrt{-1}\Lambda_\omega F_{h(t)}>0\).
At every finite time its full and restricted Chern holonomy are
conjugate to those of \(h_0\). In particular, every ample bundle
has a positive-mean-curvature metric in the same full Chern holonomy
class as any prescribed initial metric.
\end{corollary}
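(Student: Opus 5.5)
The plan has two separate parts: an analytic input on the long-time behaviour of the fixed-base bundle heat flow, and the holonomy statement, which is Theorem~\ref{sr-thm:main} applied directly.

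\emph{Holonomy.} Take the fixed Kähler base \(\omega\) and set \(b^{i\bar j}=g_\omega^{i\bar j}\), up to the normalising constant relating \(b^{i\bar j}R_{i\bar j}\) to \(\sqrt{-1}\Lambda_\omega F_h\). Then \eqref{eq:crh-7-1} is the unnormalized Hermitian--Yang--Mills heat flow \(h^{-1}h_t=-S(h)\) with \(S=\sqrt{-1}\Lambda_\omega F_h\). By Donaldson's theorem on compact Kähler manifolds, this flow exists smoothly for all \(t\ge0\) from any smooth \(h_0\). The base coefficients are fixed and smooth, which is exactly the case Theorem~\ref{sr-thm:main} lists explicitly: positive-time analyticity comes from the parameter argument given there. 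Applying the theorem on each interval \([0,T]\) gives, for every finite \(t\), that \(\Hol_p(\nabla^{h(t)})=U_p(t)\Hol_p(\nabla^{h_0})U_p(t)^{-1}\), and the same for restricted holonomy. Here \(U_p(t)\) is unitary from \(h_0\) to \(h(t)\), so this is the asserted conjugacy.

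\emph{Positivity.} Since \(h^{-1}h_t=-S\), the standard computation gives the evolution of \(S\) as a heat equation \(\partial_tS=-\Delta S\) in the appropriate sign convention. Integrating over \(M\) shows \(\int_M\tr S\,\omega^n\) is constant. The main input is the asymptotics of the flow: by the Li--Zhang--Zhang result cited above, or Jacob's work on the HYM flow and the Harder--Narasimhan filtration, the eigenvalues of \(S(t)\) converge in \(L^p\) and in a suitable sense to the HN type \(\nu_1\ge\dots\ge\nu_r\), whose smallest value is \(\nu_{\min}>0\). To get pointwise strict positivity I would argue as follows:
\begin{enumerate}
\item Use the optimal lower bound of \cite[Theorem 1.5]{LZZ}, which constructs metrics with \(S>\nu_{\min}-\varepsilon\) along this same heat flow.
\item Apply the parabolic maximum principle to the smallest eigenvalue \(\lambda_{\min}(S)\), which is a viscosity supersolution of the heat equation.
\end{enumerate}
Once \(\lambda_{\min}(S(t_0))>0\) at some \(t_0\), the maximum principle keeps it positive for all \(t\ge t_0\), giving ``eventually strictly positive''.

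\emph{Ample bundles.} An ample bundle has \(\mu_{\min}>0\), since every quotient of an ample bundle is ample and hence has positive degree. Applying the preceding conclusion to an arbitrary \(h_0\) yields a metric \(h(t)\) with \(S>0\) whose holonomy is conjugate to that of \(h_0\).

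The main obstacle is step 1 of the positivity argument: making sure the cited LZZ/Jacob asymptotics give a pointwise positive lower bound at some finite time along the heat flow started at the prescribed \(h_0\), rather than only for some other metric or in an integral sense. If the cited result only yields approximate metrics, I would fall back on Jacob's \(L^\infty\) convergence of the \(\lambda_{\min}\)-type quantity (its \(\liminf\) is at least \(\nu_{\min}\) in the smooth case) combined with the maximum principle above.
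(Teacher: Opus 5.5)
Your holonomy argument (Theorem~\ref{sr-thm:main} with fixed base coefficients, then undoing $U_p(t)$) matches the paper. So do the reduction for ample bundles and the use of the maximum principle to show that $\min_M\lambda_{\min}(S(t))$ is nondecreasing. The gap is the step you flag yourself: nothing in the proposal produces a finite time at which $\lambda_{\min}(S)>0$ everywhere on $M$. Step 1 attributes to the Li--Zhang--Zhang optimal bound (their Theorem 1.5) near-optimal metrics lying on the heat flow from the prescribed $h_0$. That theorem identifies the unrestricted supremum. Placing near-optimizers on the flow from an arbitrary $h_0$ is exactly the extra content needed to keep them in the holonomy class of $h_0$, so it cannot be assumed. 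Your Jacob fallback asserts an $L^\infty$ statement you do not have. The Harder--Narasimhan asymptotics of the Yang--Mills flow give only $L^p$ convergence of the eigenvalues of $S(t_j)$ along a sequence $t_j\to\infty$. To invoke them you also need the identification, in the unitary gauge, of bundle heat flow with $A_t=-\tfrac12 d_A^*F_A$ after rescaling time; this conjugates $S$ and so preserves its eigenvalues.

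The missing idea is a parabolic $L^1\to L^\infty$ upgrade. Since $\ell=\lambda_{\min}(S)$ satisfies $(\partial_t-\Delta_\omega)\ell\ge0$ in the barrier sense, $w=(\nu_{\min}-\ell)_+$ is a nonnegative scalar subsolution on the fixed compact base. Comparison with the heat kernel at time one gives $\|w(t_j+1)\|_{L^\infty}\le C_\omega\|w(t_j)\|_{L^1}$. The right side tends to zero for two reasons. First, the maximum principle bounds the eigenvalues of $S(t)$ uniformly by those of $S(0)$. Second, the bubbling set of the Uhlenbeck limit has measure zero. Together these upgrade the $L^p$ eigenvalue convergence to $\ell(t_j)\to\nu_{\min}$ in $L^1(M)$. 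Hence $\ell(t_j+1)\ge\nu_{\min}/2>0$ for large $j$, and your monotonicity then gives positivity for all large $t$. Alternatively, you could cite directly the extreme-eigenvalue limit along the flow proved by Chen--Li--Zhang--Zhang, which the paper notes as a substitute. Either way, some such argument must replace your Step 1.
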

\begin{proof}
The bundle heat flow exists smoothly for every finite time on a
compact K\"ahler base. Under the unitary identification of
Section~\ref{sec:second}, the K\"ahler identities turn
\eqref{eq:crh-7-4} into
\[
 A_t=-\tfrac12 d_A^*F_A.
\]
Thus \(\mathcal A(\sigma)=A(2\sigma)\) solves the Yang--Mills
flow in the normalization of \cite[Section 3.1]{Sibley}; the
identification conjugates the mean curvature endomorphism and
preserves its eigenvalues. The standard Harder--Narasimhan asymptotics
give a sequence \(t_j\to\infty\) on which the eigenvalues of \(S(t_j)\)
converge in finite \(L^p\) to the corresponding Einstein constants
of the HN type; see \cite[Lemma 3.3 and Theorem 5.13]{Sibley}.
Unitary gauge changes do not affect these eigenvalues, and
eigenvalues of Hermitian endomorphisms depend Lipschitz
continuously on the endomorphism. The singular set in the
Uhlenbeck limit has measure zero; the uniform bound on \(S\)
and the cited \(L^p\) convergence give convergence on \(M\).
In particular, if \(\ell(t,x)\) is the smallest eigenvalue, then
\(\ell(t_j,\cdot)\to\nu_{\min}\) in \(L^1\).

In the unitary gauge the mean curvature satisfies a covariant heat
equation. The two orders of complex covariant differentiation have
the same contraction on \(S\), since their commutator is \([S,S]=0\).
The smallest-eigenvalue maximum principle therefore gives
\((\partial_t-\Delta_\omega)\ell\ge0\) in the barrier sense,
with the positive normalization of the scalar Laplacian dictated
by \eqref{eq:crh-7-1}. Hence
\(w=(\nu_{\min}-\ell)_+\) is a nonnegative scalar subsolution.
The heat kernel at time one on the fixed compact base gives
\[
 \|w(t_j+1)\|_{L^\infty}
       \le C_\omega\|w(t_j)\|_{L^1}\longrightarrow0.
\]
Also \(\min_M\ell(t)\) is nondecreasing. Thus for all sufficiently
large \(t\), \(\ell(t)\ge\nu_{\min}/2>0\).
Theorem~\ref{sr-thm:main} preserves full and restricted holonomy at
every finite time, proving the first assertion.
Ampleness implies positive minimal slope for every K\"ahler base
metric, as in \cite[Corollary 4.6]{LZZ}, giving the last assertion.
\end{proof}

This does not offer a new proof of all the HN asymptotic theory.
The existence of a positive-mean-curvature metric for an ample bundle
is already known, even for any prescribed Hermitian base metric,
by \cite[Theorem 1.10]{LZZ}. The additional formulation here is the
prescribed finite-time holonomy class and its realization from an
arbitrary starting metric over a K\"ahler base. Mean positivity
does not imply Griffiths semipositivity, and no such implication
is used.

The extreme-eigenvalue limit used above is also established directly
by Chen--Li--Zhang--Zhang \cite[Theorem 1.4, formula (1.13)]{CLZZ},
on a compact Gauduchon base. Their normalization
\(H^{-1}H_\sigma=-2(S_H-\lambda\Id)\) becomes
\(h^{-1}h_t=-S_h\) after setting
\(h(t)=e^{-\lambda t}H(t/2)\); the scalar factor does not change
the Chern connection. We retain the K\"ahler formulation here.

\begin{corollary}[The optimal mean bound in each holonomy class]
\label{sr-cor:optimal-class}
For any initial metric \(h_0\), let \(\mathcal C(h_0)\) be the set
of smooth Hermitian metrics such that, at one fixed base point,
a single complex-linear isometry simultaneously conjugates their
full and restricted Chern holonomy groups to those of \(h_0\).
Then
\begin{equation}\label{eq:optimal-holonomy-class}
 \sup_{h\in\mathcal C(h_0)}\;
       \min_{x\in M}\lambda_{\min}
          (\sqrt{-1}\Lambda_\omega F_h(x))
       =\frac{2\pi}{V}\mu_{\min}(E,\omega).
\end{equation}
There is no positivity assumption on the slope in this statement.
If \(E\) is stable, this supremum is attained in \(\mathcal C(h_0)\)
if and only if that class contains its Hermitian--Einstein metric.
\end{corollary}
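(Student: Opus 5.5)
Two inequalities are needed for \eqref{eq:optimal-holonomy-class}, followed by the attainment statement for stable \(E\).

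\emph{Upper bound.} We would show that for every smooth metric \(h\), not only those in \(\mathcal C(h_0)\),
\[
 \min_M\lambda_{\min}(\sqrt{-1}\Lambda_\omega F_h)\le \frac{2\pi}{V}\mu_{\min}.
\]
Let \(Q\) be the last quotient of the Harder--Narasimhan filtration, so \(\mu(Q)=\mu_{\min}\), and write \(E\to Q\to0\). Equip \(Q\) with the quotient metric induced by \(h\); off the singular set of \(Q\) this is a smooth Hermitian metric. For quotient bundles, curvature increases: the second fundamental form gives \(\sqrt{-1}\Lambda F_Q\ge\) the compression of \(\sqrt{-1}\Lambda F_h\) to \(Q\). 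Hence
\[
 \tr\sqrt{-1}\Lambda F_Q\ge \operatorname{rk}Q\cdot\min_M\lambda_{\min}.
\]
Integrating, and using the Chern--Weil formula for degree (valid for torsion-free quotients since the singular set has codimension at least two), gives
\[
 2\pi\deg Q\ge \operatorname{rk}Q\, V\min\lambda_{\min}.
\]
This is the standard argument in \cite{LZZ}, and we would cite it there rather than redo the codimension-two bookkeeping.

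\emph{Lower bound.} Along the fixed-base bundle heat flow from \(h_0\), Theorem~\ref{sr-thm:main} keeps \(h(t)\) in \(\mathcal C(h_0)\). Indeed, \(U_p(t)\) is a single complex-linear isometry \((E_p,h_0)\to(E_p,h(t))\), and it conjugates the full and restricted groups simultaneously. We then reuse the proof of Corollary~\ref{sr-cor:mean-positive} without the positivity assumption. The HN asymptotics give \(\ell(t_j)\to\nu_{\min}\) in \(L^1\). The scalar subsolution \(w=(\nu_{\min}-\varepsilon-\ell)_+\) and the heat-kernel bound then give \(\min\ell(t)\ge\nu_{\min}-\varepsilon\) for large \(t\). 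Hence the supremum is at least \(\nu_{\min}\).

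\emph{Attainment for stable \(E\).} Suppose \(E\) is stable, so \(\mu_{\min}=\mu(E)\). If some \(h\) attains the supremum, then
\[
 \sqrt{-1}\Lambda F_h\ge\nu\,\Id,\qquad \nu=\frac{2\pi}{V}\mu(E).
\]
The trace of \(\sqrt{-1}\Lambda F_h-\nu\,\Id\) integrates to zero, and the operator is nonnegative. Therefore it vanishes identically, and \(h\) is Hermitian--Einstein. Conversely, the HE metric \(h_{HE}\) has \(\lambda_{\min}\equiv\nu\), so it attains the supremum if it lies in \(\mathcal C(h_0)\). Since the HE metric is unique up to scale and scaling does not change the Chern connection, attainment in \(\mathcal C(h_0)\) is equivalent to that class containing the HE metric.

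The main obstacle is the upper bound for a general metric, namely the quotient-curvature comparison for a possibly non-locally-free HN quotient, where integration must be justified across the singular set. We would handle this by citing \cite[Theorem 1.5]{LZZ}, which states this optimal bound. The remaining steps only reuse Corollary~\ref{sr-cor:mean-positive} and Theorem~\ref{sr-thm:main}.
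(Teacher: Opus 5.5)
Your proposal is correct and follows the paper's proof essentially step for step. It uses the Chern--Weil/\cite{LZZ} upper bound valid for every metric, the bundle heat flow from \(h_0\) (kept in \(\mathcal C(h_0)\) by Theorem~\ref{sr-thm:main} through the single isometry \(U_p(t)\)) with the argument of Corollary~\ref{sr-cor:mean-positive} run without positivity, and the trace-integral argument forcing an attaining metric to be Hermitian--Einstein. The \(\varepsilon\)-shift in your subsolution is harmless but unnecessary, since \(\|(\nu_{\min}-\ell)_+(t_j+1)\|_{L^\infty}\to0\) together with monotonicity of \(\min_M\ell\) already gives \(\min_M\ell(t)\to\nu_{\min}\).
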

\begin{proof}
The Chern--Weil inequality for the least-slope quotient gives the
displayed number as an upper bound for every metric; equivalently,
use the upper-bound part of \cite[Theorem 1.5]{LZZ}.
The proof of Corollary~\ref{sr-cor:mean-positive}, without requiring
\(\nu_{\min}>0\), gives
\(\min_M\ell(t)\to\nu_{\min}\) along bundle heat flow from \(h_0\).
Every finite-time metric belongs to \(\mathcal C(h_0)\), so the
same upper bound is the supremum within that class.

If \(E\) is stable, \(\mu_{\min}=\mu(E)\).
An attaining metric satisfies \(S\ge\nu_{\min}\Id\).
The Chern--Weil trace integral is exactly
\(\operatorname{rank}(E)\nu_{\min}V\), so the nonnegative
endomorphism \(S-\nu_{\min}\Id\) has identically zero trace.
It vanishes, and the metric is Hermitian--Einstein.
Uniqueness on a stable bundle makes this metric a constant multiple
of the Hermitian--Einstein metric. The converse is immediate.
\end{proof}

The unrestricted supremum is already \cite[Theorem 1.5]{LZZ}.
The added conclusion here is that imposing a finite-time holonomy
class does not change that value, although it can prevent attainment.
For a stable bundle, any smooth limit of optimizing metrics in a
different holonomy class must therefore leave that class.
The assertion of nonattainment does not itself assert smooth
compactness of arbitrary optimizing sequences.

\section{Further aspects of first Chern--Ricci flow}\label{sec:further}
We record consequences that are directly related to the first-flow
examples: the failure of the projected-curvature argument,
holonomy changes for torsion-modified connections, and initial
conditions under which the Chern connection is preserved.

\subsection{Why a homogeneous projected-curvature system fails}\label{sec:error}
The examples already rule out a general holonomy-preservation theorem.
A diagonal companion makes the failure of the usual error system
especially transparent.
On a complex three-torus use the same scalar profile as in Section~\ref{sec:crossing},
put \(b=P_S\), and set
\[
 \widehat g(t)=\diag(P_{s_0+t/4},b,b^{-1}).
\]
Its determinant is \(P_{s_0+t/4}\), so it is an immortal
Chern--Ricci solution with uniform all-order bounds.
It is balanced at every time: in \(d\widehat\omega^{\,2}\), every
term containing \(dz^1\wedge d\bar z^1\) vanishes upon differentiation,
and the remaining coefficient is \(b\,b^{-1}=1\).
Write \(u=\log P_{s_0+t/4}\), \(v=\log b\), and \(q_s=(\log P_s)''\).
In its global unitary frame,
\[
 A=\tfrac i2\diag(u',v',-v')\,dy^1,\qquad
 F=\tfrac i2\diag(q_s,q_S,-q_S)\,dx\wedge dy^1.
\]
At \(\tau\), full and restricted holonomy are the circle generated
by \(iL\), where \(L=\diag(1,1,-1)\).

On skew-Hermitian endomorphisms use \(\langle A,B\rangle=-\tr(AB)\).
Let \(Q\) be orthogonal projection onto the complement of
\(\mathbb R iL\), and put \(D=E_{11}-L/3\).
In the temporal gauge given by the displayed unitary frames, \(Q\)
is fixed and parallel at all times: all connection matrices commute
with \(L\). Consequently
\begin{equation}\label{eq:QF-new}
 QF=\tfrac i2(q_s-q_S)D\,dx\wedge dy^1.
\end{equation}
At \(t=\tau\), this tensor and all its spatial derivatives vanish.
However \(q_s(0)=1/(\cosh s+1)\), so
\begin{equation}\label{eq:transverse-new}
 \left.\partial_t(QF)(\partial_x,\partial_{y^1})\right|_{(\tau,0)}
 =-\frac{i\sinh S}{8(\cosh S+1)^2}D\ne0.
\end{equation}
Any covariant time-connection term acting on \(QF\) vanishes there,
so the nonzero derivative is independent of that notational choice.

For example, consider
\[
 X=(QF,Q\nabla^{\mathrm{Ch}}F),\qquad
 Y=(\nabla^{\mathrm{Ch}}Q,(\nabla^{\mathrm{Ch}})^2Q).
\]
At the terminal time \(X=Y=0\), and all spatial derivatives of \(X\)
vanish. Equation \eqref{eq:transverse-new} therefore excludes an estimate
\[
 |(\partial_t-\Delta)X|
 \le C(|X|+|\nabla X|+|Y|)
\]
with a finite uniform \(C\), even if it is initially asserted only
for \(t<\tau\) and then passed to the terminal limit.
The issue occurs before applying backward uniqueness: these errors
do not satisfy a closed homogeneous system.

The missing geometric identity is visible in the two Chern contractions.
At \(\tau\), with \(q=q_S\), their raised endomorphisms are
\[
 S^{(1)}=-\frac{q}{4b}E_{11},\qquad
 S^{(2)}=-\frac{q}{4b}L.
\]
The second follows the curvature endomorphism direction; the first,
which drives the metric flow, has a component transverse to it.
Indeed,
\[
 R_{1\bar1\,2\bar2}=-\tfrac14bq,\qquad
 R_{2\bar2\,1\bar1}=0.
\]
The slot-exchange symmetry of K\"ahler curvature fails.
The same point has
\[
 T^2{}_{12}=\tfrac12v',\qquad
 \nabla_{\bar1}T^2{}_{12}=\tfrac14v''=\tfrac14q,
\]
so reduced terminal holonomy does not force the relevant torsion
derivative to vanish.

Kotschwar's argument for Ricci flow closes a homogeneous PDE--ODE
system for projected curvature and transported projection errors
\cite{Khol}. Enlarging the present errors by torsion quantities does
not automatically repair the argument: the added errors need not
have zero terminal data. The compact metric uniqueness argument
below does not supply the missing homogeneous system for these errors.

\subsection{Gauduchon connections}\label{sec:gaud}
We keep the first Chern--Ricci flow and change the connection
whose holonomy is measured. The following construction shows that
no fixed non-Chern member of the Gauduchon family has a universal
holonomy-preservation property for this flow.

Correa--Galindo--Grama constructed examples where Gauduchon
Ricci-flatness changes under Chern--Ricci flow \cite{CGG}.
In their examples the entire Ricci form is of type \((1,1)\);
their Theorem A and formulas (40)--(42) therefore imply changes
of restricted determinant holonomy for \(\kappa<1\).
Vanishing trace curvature alone does not identify the full
holonomy group or remove its discrete determinant monodromy.
Theorem~\ref{thm:gaud} supplies exact full groups on a complex
two-torus for every \(\kappa\ne1\), including \(\kappa>1\), with
immortal uniform control and a prescribed interior time.
\subsubsection{The scalar flow and its endpoint map}
Work on \(M=\mathbb C^2/(2\pi\mathbb Z+2\pi i\mathbb Z)^2\), and let
\(x=\operatorname{Re}z^1\). For a real smooth periodic function \(f(x)\), set
\[
 g(t)=\operatorname{diag}(e^{u(t,x)},e^{f(x)}).
\]
Every such metric is globally conformally K\"ahler:
\(e^{-f}\omega(t)\) is closed, since its first coefficient depends
only on \(x=\Re z^1\) and its second coefficient is constant.
Its Lee form is the fixed exact form \(df\).
The first Chern--Ricci flow is precisely
\[
 u_t=\tfrac14e^{-u}(u+f)_{xx}.
\]
Writing \(h=u+f\), this becomes
\begin{equation}\label{eq:crh-3-1}
 h_t=\tfrac14e^{f-h}h_{xx},\qquad h(0)=v+f.
\end{equation}
The maximum principle bounds \(h\) between the extrema of \(v+f\).
Consequently \eqref{eq:crh-3-1} is uniformly parabolic while the solution exists.
Interior H\"older estimates on sliding time intervals first control
\(h\), and hence the coefficient \(e^{f-h}/4\), uniformly.
Schauder estimates and bootstrapping then control all spatial
derivatives, with constants independent of the final time
\cite{Lieberman}. Continuation gives a smooth immortal solution;
smooth initial data supply the estimates near \(t=0\).
The equation then also controls each fixed order of time derivatives.
In particular all metric curvature and torsion derivatives are uniformly
bounded. The conserved mass and a strict Lyapunov functional are
\begin{equation}\label{eq:crh-3-2}
 \mathcal M=\int e^{h-f}\,dx,\qquad
 \mathcal E(h)=\tfrac12\int h_x^2\,dx,\qquad
 \frac{d}{dt}\mathcal E(h)=-\tfrac14\int e^{f-h}h_{xx}^2\,dx.
\end{equation}
Poincar\'e's inequality and uniform parabolicity give exponential decay of
\(\mathcal E\). Uniform higher derivative estimates and the mass identity
then imply smooth convergence
\begin{equation}\label{eq:crh-3-3}
 h(t)\longrightarrow c_\infty,\qquad
 e^{c_\infty}=\frac{\mathcal M}{\int e^{-f}\,dx}.
\end{equation}

Fix \(\tau>0\) and \(\kappa\ne1\). Choose a positive integer \(N\) as follows:
if \(0<\kappa<1\), require
\begin{equation}\label{eq:crh-3-4}
 \rho=e^{-N^2\tau/4}<1-\kappa;
\end{equation}
otherwise any \(N\ge1\) will do.
Let \(X\) be the real Banach space of even, zero-mean, \(2\pi/N\)-periodic
\(C^{4,\alpha}\) functions, where \(0<\alpha<1\), and let \(\Pi\) subtract
the mean. Near \((v,f)=(0,0)\), define
\begin{equation}\label{eq:crh-3-5}
 \mathcal F(v,f)=\Pi\bigl(h_{v,f}(\tau)+(\kappa-1)f\bigr)\in X.
\end{equation}

\begin{lemma}\label{lem:endpoint}
The map in \eqref{eq:crh-3-5} is continuously differentiable near zero, and
\begin{equation}\label{eq:crh-3-6}
 D_f\mathcal F(0,0)=(\kappa-1)\Id+e^{\tau\partial_x^2/4},\qquad
 D_v\mathcal F(0,0)=e^{\tau\partial_x^2/4}.
\end{equation}
Its first derivative in \(f\) is a bounded isomorphism of \(X\).
There is a branch \(f=f(v)\), with \(f(0)=0\), satisfying
\begin{equation}\label{eq:crh-3-7}
 h_{v,f(v)}(\tau)+(\kappa-1)f(v)=c(v)
\end{equation}
for a spatial constant \(c(v)\).
For \(v=\varepsilon\cos(Nx)\),
\begin{equation}\label{eq:crh-3-8}
 f_\varepsilon=
 -\frac{\rho}{\kappa-1+\rho}\,\varepsilon\cos(Nx)
 +O_{C^{4,\alpha}}(\varepsilon^2).
\end{equation}
Every \(f_\varepsilon\) in this branch is smooth.
\end{lemma}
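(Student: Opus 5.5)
The plan is to treat $\mathcal F$ as a composition of smooth maps between H\"older spaces. The analytic content is that the solution map of the quasilinear equation \eqref{eq:crh-3-1}, sending $(v,f)\mapsto h_{v,f}(\tau)$, is $C^1$ from a neighborhood of $0$ in $C^{4,\alpha}\times C^{4,\alpha}$ into $C^{4,\alpha}$. Everything else reduces to linearization, spectral calculus, and the implicit function theorem.

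\textbf{Differentiability of the solution map.} Write \eqref{eq:crh-3-1} as
\[
\Psi(h;v,f)=\bigl(h_t-\tfrac14e^{f-h}h_{xx},\ h(0)-v-f\bigr)
\]
on parabolic H\"older spaces $C^{4+\alpha,(4+\alpha)/2}([0,\tau]\times S^1)$, restricted to $2\pi/N$-periodic even functions. Both the equation and these symmetry classes are preserved by $x\mapsto -x$ and by translation by $2\pi/N$. The map $\Psi$ is smooth, since it is a polynomial in $h_{xx}$ composed with exponentials.

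At $(v,f)=(0,0)$ the solution is $h\equiv0$, and $D_h\Psi$ is $w\mapsto(w_t-\tfrac14w_{xx},\,w(0))$. By linear parabolic Schauder theory this is an isomorphism. One caveat: compatibility conditions at $t=0$ in high parabolic H\"older spaces may cause trouble. I would avoid this by working with $C^{2+\alpha,1+\alpha/2}$ solutions and upgrading spatial regularity at the final time via interior smoothing, or by using weighted spaces. This technical step is the one I expect to be the main obstacle: getting a $C^1$ map exactly into $C^{4,\alpha}$ at $t=\tau$ with $C^{4,\alpha}$ data.

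The implicit function theorem then gives $C^1$ dependence of $h$ on $(v,f)$. Evaluation at $\tau$ and the projection $\Pi$ are bounded linear, and $\Pi$ maps even $2\pi/N$-periodic functions into $X$. This proves that $\mathcal F$ is $C^1$.

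\textbf{Linearization and invertibility.} Differentiating at $h=0$: the coefficient variation $e^{f-h}$ multiplies $h_{xx}=0$, so only the heat operator survives. The variation $\dot h$ solves $\dot h_t=\tfrac14\dot h_{xx}$ with $\dot h(0)=\dot v+\dot f$, hence $\dot h(\tau)=e^{\tau\partial_x^2/4}(\dot v+\dot f)$. This yields \eqref{eq:crh-3-6}, since $e^{\tau\partial_x^2/4}$ commutes with $\Pi$ on mean-zero functions.

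On $X$ the Fourier modes are $\cos(kNx)$ with $k\ge1$, on which $D_f\mathcal F$ acts by the multiplier $\kappa-1+e^{-k^2N^2\tau/4}$.
\begin{itemize}
\item If $\kappa>1$, the multiplier is at least $\kappa-1>0$.
\item If $0<\kappa<1$, then by \eqref{eq:crh-3-4} it is at most $\kappa-1+\rho<0$, and it tends to $\kappa-1\ne0$.
\end{itemize}
(I assume $\kappa>0$, as for the Gauduchon family; $\kappa\le0$ is handled the same way, since then $\kappa-1+\rho\le-1+\rho<0$.)

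So $D_f\mathcal F(0,0)=(\kappa-1)(\Id+K)$, where $K=(\kappa-1)^{-1}e^{\tau\partial_x^2/4}$. Here $(\kappa-1)^{-1}e^{\tau\partial_x^2/4}$ is a smoothing operator bounded on $C^{4,\alpha}$, and the full multiplier is bounded away from zero. Inverting $\Id+K$ via a Neumann series is not directly available, because the norm of $K$ need not be small. Instead I would write the inverse explicitly as
\[
(\kappa-1)^{-1}\bigl(\Id-m(\partial_x)\bigr),\qquad m=\frac{e^{-k^2N^2\tau/4}}{\kappa-1+e^{-k^2N^2\tau/4}}.
\]
This $m$ is a rapidly decaying multiplier, so $m(\partial_x)$ maps $C^{4,\alpha}$ into smooth functions, and the inverse is bounded on $X$.

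\textbf{Branch, expansion, and smoothness.} The implicit function theorem gives $f=f(v)$ with $\mathcal F(v,f(v))=0$, which is \eqref{eq:crh-3-7}. To first order,
\[
f'(0)\dot v=-\bigl(D_f\mathcal F\bigr)^{-1}D_v\mathcal F\,\dot v .
\]
On $\cos(Nx)$ this is multiplication by $-\rho/(\kappa-1+\rho)$, which gives \eqref{eq:crh-3-8}, with a $C^1$ remainder that I would make $O(\varepsilon^2)$ by upgrading to $C^2$ smoothness of $\mathcal F$ (the same argument applies).

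For smoothness of $f_\varepsilon$, rewrite \eqref{eq:crh-3-7} as
\[
f=(1-\kappa)^{-1}\bigl(h(\tau)-c\bigr).
\]
The parabolic solution at positive time $\tau$ is smooth whenever the coefficient $e^{f-h}$ is as regular as $f$. Bootstrapping this, $f\in C^{k,\alpha}$ gives $h(\tau)\in C^{k+2,\alpha}$ by Schauder estimates, and hence $f\in C^{k+2,\alpha}$, which yields $f_\varepsilon\in C^\infty$.
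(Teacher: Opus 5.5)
Your proposal is correct and follows essentially the same route as the paper. Both arguments use the implicit function theorem for the parabolic solution map in periodic H\"older spaces, linearize to the heat semigroup, prove spectral invertibility on the cosine modes, apply the implicit function theorem again for the branch, and bootstrap smoothness via Schauder estimates through \eqref{eq:crh-3-7}; your explicit multiplier inverse is an equivalent substitute for the paper's compactness-plus-Fredholm-alternative step. The compatibility obstacle you flag does not arise, because on the circle there is no lateral boundary, so the heat operator with initial trace is already an isomorphism from $C^{4+\alpha,2+\alpha/2}$ onto $C^{2+\alpha,1+\alpha/2}\times C^{4,\alpha}$, which is exactly the setting the paper uses, and no workaround is needed.
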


\begin{proof}
The local solution map for \eqref{eq:crh-3-1} follows, including differentiable dependence
on \((v,f)\), by applying the Banach implicit function theorem to
\[
 (h,v,f)\longmapsto
 \left(h_t-\tfrac14e^{f-h}h_{xx},\ h(0)-v-f\right)
\]
from \(C^{4+\alpha,2+\alpha/2}\) in the solution variable to
\(C^{2+\alpha,1+\alpha/2}\times C^{4,\alpha}\), with parameters
\(v,f\in C^{4,\alpha}\); all spaces are periodic in \(x\).
Multiplication and the exponential are smooth maps in these spaces.
There are no spatial boundary compatibility conditions on the circle.
At the zero solution the derivative
in \(h\) is the heat operator together with its initial trace, a bounded
isomorphism by the parabolic Schauder theorem. For parameters in a small
neighborhood, the solution exists on the fixed interval \([0,\tau]\).
Evaluation at \(\tau\) is bounded into \(C^{4,\alpha}\).
Linearizing \eqref{eq:crh-3-1} at zero gives the heat equation, with initial value
\(\dot v+\dot f\); the coefficient variation multiplies \(h_{xx}=0\).
This proves \eqref{eq:crh-3-6}.

The heat semigroup \(e^{\tau\partial_x^2/4}\) is compact on \(X\).
Its eigenvalues are
\(e^{-j^2N^2\tau/4}\), \(j=1,2,\ldots\), on the cosine modes.
For \(\kappa\le0\) or \(\kappa>1\), none of the numbers
\(\kappa-1+e^{-j^2N^2\tau/4}\) vanishes. For \(0<\kappa<1\), condition
\eqref{eq:crh-3-4} gives the same conclusion. The Fredholm alternative proves
invertibility on \(X\). The implicit function theorem now gives \eqref{eq:crh-3-7},
and differentiating the branch yields \eqref{eq:crh-3-8}. Smoothness of the nonlinear
maps also gives the quadratic remainder in \eqref{eq:crh-3-8}.

For completeness, regularity of the branch improves beyond the initial
H\"older space. If \(f,v\in C^{k,\alpha}\), for an even integer
\(k\ge4\), then \(h\in C^{k+\alpha,(k+\alpha)/2}\) on the initial
compact slab. Regard \(A=\tfrac14e^{f-h}\) as a given coefficient
of the linear equation \(h_t-Ah_{xx}=0\). It has the same parabolic
H\"older regularity. Interior Schauder estimates, on a smaller slab
ending at \(\tau\), give
\(h\in C^{k+2+\alpha,(k+2+\alpha)/2}\), and hence
\(h(\tau)\in C^{k+2,\alpha}\).
There is no inhomogeneous term containing derivatives of \(f\).
Equation \eqref{eq:crh-3-7}, with \(\kappa-1\ne0\), therefore yields
\(f\in C^{k+2,\alpha}\). Starting from \(k=4\) and iterating proves
\(f\in C^\infty\); the chosen \(v\) is already smooth.
\end{proof}

\subsubsection{The holonomy calculation}
Use the convention
\[
 \nabla^\kappa=\tfrac{1+\kappa}{2}\nabla^{\mathrm{Ch}}
              +\tfrac{1-\kappa}{2}\nabla^{\mathrm B};
 \qquad \kappa=1\text{ is Chern},\quad \kappa=-1\text{ is Bismut}.
\]
Set \(y=\operatorname{Im}z^1\), \(z^2=x^2+iy^2\),
\[
 J_2=E_{21}-E_{12},\qquad J_3=i(E_{21}+E_{12}),\qquad
 k=\tfrac{1-\kappa}{4}f'e^{(f-u)/2}.
\]
In the global unitary frame
\((e^{-u/2}\partial_{z^1},e^{-f/2}\partial_{z^2})\), direct calculation gives
\begin{equation}\label{eq:crh-3-9}
 \mathcal A^\kappa=
 \tfrac i2\operatorname{diag}(u',\kappa f')\,dy+
 kJ_2\,dx^2+kJ_3\,dy^2.
\end{equation}
In particular,
\begin{equation}\label{eq:crh-3-10}
 \mathcal F^\kappa_{x,x^2}=k'J_2,\quad
 \mathcal F^\kappa_{x,y^2}=k'J_3,\quad
 \operatorname{tr}\mathcal F^\kappa
  =\tfrac i2(u''+\kappa f'')\,dx\wedge dy.
\end{equation}

\begin{theorem}\label{thm:gaud}
For every \(\kappa\in\mathbb R\setminus\{1\}\) and every \(\tau>0\),
there is a smooth immortal first Chern--Ricci flow of globally
conformally K\"ahler metrics, with fixed Lee form and uniform bounds
of every order, such that
\begin{equation}\label{eq:crh-3-11}
 \Hol(\nabla^\kappa(t))=\Hol^0(\nabla^\kappa(t))=
 \begin{cases}
 SU(2),&t=\tau,\\
 U(2),&t\ge0,\ t\ne\tau .
 \end{cases}
\end{equation}
The limiting metric has full and restricted \(\nabla^\kappa\)-holonomy
\(U(2)\).
\end{theorem}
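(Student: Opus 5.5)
The plan is to apply Lemma~\ref{lem:endpoint} with \(v=\varepsilon\cos(Nx)\) and small \(\varepsilon\ne0\). This gives smooth \(f=f_\varepsilon\) and initial metric \(g(0)=\diag(e^{v},e^{f})\), that is, \(u(0)=v\) and \(h(0)=v+f\). The scalar analysis around \eqref{eq:crh-3-1}--\eqref{eq:crh-3-3} gives an immortal, uniformly controlled, globally conformally K\"ahler solution with fixed Lee form \(df\). What remains is the holonomy. Equation \eqref{eq:crh-3-9} exhibits a global unitary frame, so the connection form is globally defined and full holonomy lies in \(U(2)\). Restricted holonomy is computed from curvature by Ambrose--Singer, exactly as in Section~\ref{sec:crossing}.

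\emph{Step 1: the \(\su(2)\) part at every time.} By \eqref{eq:crh-3-10}, \(k'J_2\) and \(k'J_3\) lie in the holonomy algebra wherever \(k'\ne0\). Their bracket is a multiple of \(i(E_{11}-E_{22})\), so the algebra contains \(\su(2)\) as soon as \(k'\not\equiv0\). Now \(k=\tfrac{1-\kappa}{4}f'e^{(f-u)/2}\) with \(\kappa\ne1\). Since \(f_\varepsilon\) is nonconstant by \eqref{eq:crh-3-8}, \(f'\) vanishes at some point and is nonzero nearby. A periodic function \(f'e^{(f-u)/2}\) with \(k'\equiv0\) would be constant, and being zero somewhere it would vanish identically, forcing \(f\) constant. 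This is a contradiction, so \(\su(2)\subset\mathfrak{hol}\) for all \(t\in[0,\infty]\), including the limit.

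\emph{Step 2: the trace.} The holonomy algebra is \(\uu(2)\) exactly when the trace curvature \(\tfrac i2(u''+\kappa f'')\) is not identically zero. Conversely, if it vanishes identically, the determinant part of \eqref{eq:crh-3-9}, namely \(\tfrac i2(u'+\kappa f')dy\), is a closed global form. Its integrals over loops are \(\pi\oint(u'+\kappa f')\), taken over the \(y\)-direction at fixed \(x\). A vanishing second derivative means \(u'+\kappa f'\) is a periodic function with zero mean and zero derivative, hence identically zero. So the determinant monodromy is trivial and the full group is \(SU(2)\). This is the step where the full (not just restricted) group is pinned down: one must check that the \(dy\)-coefficient is exactly zero, not merely that its derivative is. Because periodic derivatives have zero mean, the computation is clean.

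Thus \(\Hol=\Hol^0=SU(2)\) iff \(u+\kappa f\) is constant, and \(U(2)\) otherwise. Since \(u=h-f\), the condition reads \(h+(\kappa-1)f=\mathrm{const}\). At \(t=\tau\) this is precisely \eqref{eq:crh-3-7}, giving \(SU(2)\) there. At the limit, \(h\to c_\infty\) by \eqref{eq:crh-3-3}, so \(u_\infty+\kappa f=c_\infty+(\kappa-1)f\) is nonconstant and the limit has \(U(2)\). At \(t=0\) we have \(v+\kappa f=\varepsilon\cos(Nx)\bigl(1-\kappa\rho/(\kappa-1+\rho)\bigr)+O(\varepsilon^2)\). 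The coefficient equals \((\kappa-1)(1-\rho)/(\kappa-1+\rho)\ne0\), so this is nonconstant for small \(\varepsilon\).

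\emph{Main obstacle: all other times \(t\ne\tau\).} Put \(w(t)=h(t)+(\kappa-1)f\). We need \(w(t)\) nonconstant for every \(t\ne\tau\), and for this I would use the nonlinear structure. The function \(w\) satisfies \(w_t=h_t=\tfrac14e^{f-h}h_{xx}\). Suppose \(w(t_1)\) is constant for some \(t_1\ne\tau\). Then \(h(t_1)=c-(\kappa-1)f\) and \(h(\tau)=c(v)-(\kappa-1)f\) differ by a constant. The flow \eqref{eq:crh-3-1} commutes with adding constants only up to rescaling the coefficient by \(e^{-c}\), and the conserved mass \(\mathcal M\) in \eqref{eq:crh-3-2} fixes that constant. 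Hence \(h(t_1)=h(\tau)\). Since \(\mathcal E\) is strictly decreasing unless \(h_{xx}\equiv0\), this forces \(h\) to be constant, and then \(f\) is constant, a contradiction. So \(\tau\) is the unique exceptional time.
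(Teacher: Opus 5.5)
Your proposal is correct and follows essentially the same route as the paper. Both proofs obtain $\su(2)$ from $k'J_2$, $k'J_3$ and their bracket; both show the group is $SU(2)$ exactly when $u+\kappa f$ is constant, which holds at $\tau$ by \eqref{eq:crh-3-7} and fails at the limit by \eqref{eq:crh-3-3}; and both prove $\tau$ is the unique exceptional time via mass conservation and strict dissipation of $\mathcal E$. The differences are minor: you show $k'\not\equiv0$ globally, using periodicity and a zero of $f'$, rather than at $x=0$ via evenness and $f''(0)\neq0$, and your separate check at $t=0$ is redundant because the uniqueness argument already covers that time.
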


\begin{proof}
Take \(v=\varepsilon\cos(Nx)\) and \(f=f_\varepsilon\) from Lemma
\ref{lem:endpoint}, for a sufficiently small nonzero \(\varepsilon\).
Equation \eqref{eq:crh-3-8} gives \(f''(0)\ne0\), while evenness gives \(f'(0)=0\).
At this fixed point and at every time,
\[
 k'(0)=\tfrac{1-\kappa}{4}e^{(f(0)-u(t,0))/2}f''(0)\ne0.
\]
The first two curvature values in \eqref{eq:crh-3-10}, and their bracket, generate
\(\su(2)\). Thus restricted holonomy contains \(SU(2)\) at every time.

Equation \eqref{eq:crh-3-7} is equivalent to \(u(\tau)+\kappa f=c\).
The entire globally defined form \eqref{eq:crh-3-9} is therefore traceless at \(\tau\).
This proves equality with \(SU(2)\) for full and restricted holonomy there.

At any time when \(u+\kappa f\) is nonconstant, periodicity and the last
identity of \eqref{eq:crh-3-10} give nonzero trace curvature somewhere. Together with
\(\su(2)\), it generates \(\mathfrak u(2)\), hence full \(U(2)\).
It remains to exclude a second time \(t\ne\tau\) with
\(u(t)+\kappa f=c'\). Conservation of \(\int e^u\) forces \(c'=c\),
so \(h(t)=h(\tau)\) as functions. But the endpoints would then have the
same energy in \eqref{eq:crh-3-2}. Its dissipation forces \(h_{xx}=0\) throughout the
interval between them, contradicting the nonconstant endpoint
\(h(\tau)=c+(1-\kappa)f\). Thus the exceptional time is unique.

Finally, \eqref{eq:crh-3-3} gives \(u_\infty+\kappa f=c_\infty+(\kappa-1)f\), a nonconstant
function. The same curvature argument proves limiting holonomy \(U(2)\).
\end{proof}

The trace-curvature zero is transverse. At \(x=0,t=\tau\),
\begin{equation}\label{eq:crh-3-12}
 \partial_t(u''+\kappa f'')
 =\tfrac{1-\kappa}{4}e^{-c}(e^{\kappa f}f'')''
 =-\frac{(1-\kappa)\rho N^4}{4(\kappa-1+\rho)}
    \varepsilon+O(\varepsilon^2)\ne0.
\end{equation}
This is additional local information; the energy argument proves the
global uniqueness of the exceptional time.

\subsubsection*{The Bismut case: an isolated HKT time}
For \(\kappa=-1\), the terminal relation is \(u(\tau)=f+c\), so
\[
 g(\tau)=e^f\operatorname{diag}(e^c,1).
\]
This metric is globally conformal to a constant hyperk\"ahler metric on
the four-torus, so it is hyperhermitian for the same constant
hypercomplex triple. Every hyperhermitian metric in real dimension
four is HKT; see \cite[Section 3]{GP}.
This is also visible directly in \eqref{eq:crh-3-9}: at \(t=\tau\),
the Bismut connection form in the displayed unitary frame is
\(\mathfrak{su}(2)=\mathfrak{sp}(1)\)-valued and preserves that triple.
At every other finite time, its Bismut holonomy is \(U(2)\), so it
cannot preserve an orthogonal complex structure anticommuting with
the fixed complex structure. This gives an isolated HKT time along
the Chern--Ricci flow. The torsion is not closed in this diagonal
example, since \(f\) is nonconstant.
\subsection{The torsion-subtracted connection}
Let \(D^\lambda_XY=\nabla^{\mathrm{Ch}}_XY-\lambda T(X,Y)\) for
\(X,Y\in T^{1,0}M\), with \((D^\lambda)^{0,1}=\bar\partial\).
These connections are generally not metric.
In the holomorphic coordinate frame their forms are
\begin{equation}\label{eq:crh-3-13}
 A^\lambda=
 \tfrac12\operatorname{diag}(u',(1-\lambda)f')\,dz^1+
 \tfrac{\lambda f'}2E_{21}\,dz^2.
\end{equation}
Their determinant forms differ from the determinant form of
\(\nabla^{1-\lambda}\) by the globally exact real form
\begin{equation}\label{eq:crh-3-14}
 \operatorname{tr}A^\lambda-\operatorname{tr}\mathcal A^{1-\lambda}
 =\tfrac12d\bigl(u+(1-\lambda)f\bigr).
\end{equation}
Thus Theorem~\ref{thm:gaud}, with \(\kappa=1-\lambda\), also gives a
determinant holonomy contraction from \(U(1)\) to the identity and back
for every \(\lambda\ne0\). This is a corollary of \eqref{eq:crh-3-11}, not an independent
construction.

For \(\lambda=0\), the connection is the Chern connection,
and Section~\ref{sec:crossing} already gives a counterexample.
Thus no fixed member of either family universally preserves
holonomy under the first Chern--Ricci flow.

\subsection{Uniqueness and sufficient conditions for preservation}
The finite-time determinant-flatness assertion at the end of
Section~\ref{sec:crossing} uses compact backward uniqueness for
the first Chern--Ricci flow. This has a scalar formulation.
Suppose \(g(t)\) and \(\widehat g(t)\) are two smooth solutions
on \([0,\tau]\) with \(g(\tau)=\widehat g(\tau)\), and define
\[
 \phi(t)=\int_\tau^t\log\frac{\det g(s)}{\det\widehat g(s)}\,ds.
\]
Their equations give
\(g(t)-\widehat g(t)=\partial\bar\partial\phi(t)\), and hence
\[
 \phi_t=a^{i\bar j}(t,x)\partial_i\partial_{\bar j}\phi,
 \qquad \phi(\tau)=0,\qquad
 a^{i\bar j}=\int_0^1
       \bigl((1-r)\widehat g+rg\bigr)^{i\bar j}\,dr.
\]
On the compact time slab these coefficients are smooth and
uniformly elliptic. Backward uniqueness for this scalar linear
equation, for example \cite[Theorem 3]{Kback} with the ODE variable
absent, implies \(\phi=0\), and therefore \(g=\widehat g\).
In applying that theorem, choose a fixed background metric and
write the scalar operator in divergence form; the difference
is a bounded first-order term.
Forward uniqueness then extends equality beyond the meeting time.
Consequently, if the first Chern--Ricci tensor vanishes at a finite
time, comparison with the constant solution makes the flow stationary.

Another standard-tools consequence concerns parallel torsion.
Ni's theorem \cite[Theorem 1.2]{Ni} says that an irreducible restricted
Hermitian holonomy representation with parallel torsion and nonzero first
Ricci curvature forces the metric to be K\"ahler.
For the Chern connection this gives a simple preservation regime:
initial irreducible restricted holonomy and parallel Chern torsion lead either
to a stationary Chern--Ricci-flat solution or to K\"ahler--Ricci flow,
where the Ricci-flow holonomy theorem applies.
The reducible parallel-torsion case requires additional analysis;
no classification of it is asserted here.

There is also a direct sufficient condition that avoids a
classification. If
\(\nabla^{\mathrm{Ch}}_{g_0}\Ric^{(1)}(g_0)=0\), then
\[
 g(t)=g_0-t\Ric^{(1)}(g_0)
\]
solves the flow for as long as this form is positive, and its
Chern connection is exactly independent of time.
Indeed the initial Chern connection preserves every displayed
metric and has the same \((0,1)\)-part; uniqueness of the Chern
connection identifies it with the evolving connection. Its trace
curvature is therefore fixed, which verifies the equation.
Compact uniqueness identifies any flow with this explicit one.
In particular, this applies when both Chern torsion and Chern
curvature are initially parallel. This is an elementary sufficient
condition, not a general necessity criterion.

\appendix
\section{Chern holonomy in Yang's Hopf solution}\label{app:hopf}
We compute the full and restricted Chern holonomy of a previously
published first Chern--Ricci flow, to clarify the comparison in
the introduction. Let
\[
 M=(\mathbb C^2\setminus\{0\})/\langle z\mapsto\mu z\rangle,
 \qquad |\mu|>1,
\]
and put \(\rho=|z|^2\), \(P_z=zz^*/\rho\), and
\(g_0=\rho^{-1}I\). All matrices in this appendix represent
the squared norm \(v^*gv\).

\subsection{The Chern holonomy of the metric family}
A spatially constant rescaling does not change the Chern connection.
Consider
\[
 h_c=\rho^{-1}(I+cP_z),\qquad c>-1.
\]
Direct multiplication gives
\begin{equation}\label{eq:crh-B-4}
 \Gamma_k=h_c^{-1}\partial_kh_c
 =-\frac{\bar z_k}{\rho}I+
   \frac c\rho e_kz^*-\frac{c\bar z_k}{\rho}P_z.
\end{equation}
At the point \(z=(1,0)\), the only nonzero complex curvature component is
\begin{equation}\label{eq:crh-B-5}
 R_{2\bar2}=\diag(1+c,1-c).
\end{equation}
More invariantly,
\[
 R_{k\bar l}
 =\left(\frac{\delta_{kl}}{\rho}
          -\frac{\bar z_k z_l}{\rho^2}\right)
    \bigl((1-c)I+2cP_z\bigr).
\]
At \(z=(1,0)\), differentiation after parallel transport gives
\begin{equation}\label{eq:crh-B-6}
 D_{\bar2}R_{2\bar2}=2cE_{12},\qquad
 D_2R_{2\bar2}=2c(1+c)E_{21}.
\end{equation}
For \(c\ne0\), these lie in the complexified restricted holonomy algebra
and generate \(\mathfrak{sl}(2,\mathbb C)\). Equation \eqref{eq:crh-B-5} has nonzero
trace, so the complexification is \(\mathfrak{gl}(2,\mathbb C)\).
Since the Chern connection is unitary, its real holonomy algebra is
\(\mathfrak u(2)\), and full and restricted holonomy are \(U(2)\).

At \(c=0\), the connection form is the scalar
\(-\partial\log\rho\,I\), with nonzero scalar curvature.
Both the lifted parallel transport and the deck derivative \(\mu I\)
are scalar. Unitarity then gives full and restricted holonomy equal
to the central \(U(1)\).

\subsection{An interior Chern crossing in Yang's explicit solution}
The solution originating in \cite{TW}, in the parametrization
of \cite[formula (2.2)]{YangHopf}, is in complex dimension two
\begin{equation}\label{eq:crh-B-7}
 g_Y(t)=\rho^{-1}\bigl[(1+T_0-2t)I-(T_0-2t)P_z\bigr],
 \qquad 0\le t<(T_0+1)/2,
\end{equation}
where \(T_0>0\). Its determinant is
\((1+T_0-2t)\rho^{-2}\), so direct differentiation verifies the
first Chern--Ricci equation. Up to the spatially constant positive
factor \(1+T_0-2t\), it equals \(h_{c(t)}\), where
\begin{equation}\label{eq:crh-B-8}
 c(t)=\frac{2t-T_0}{1+T_0-2t}>-1.
\end{equation}
Thus the preceding calculation gives the full and restricted groups
\[
 \Hol(g_Y(t))=\Hol^0(g_Y(t))=
 \begin{cases}
 U(2),&t\ne T_0/2,\\
 U(1)\text{ (central)},&t=T_0/2.
 \end{cases}
\]
This is an interior \(U(2)\)-to-\(U(1)\)-to-\(U(2)\) crossing
in a previously published explicit family. Its Chern-parallel
endomorphism space has dimension \(1,4,1\), respectively.

The standard Hopf metric in complex dimension two is pluriclosed:
\(\partial\bar\partial(\rho^{-1}\omega_{\mathrm{Eucl}})=0\)
follows from the harmonicity of \(\rho^{-1}\) on
\(\mathbb R^4\setminus\{0\}\).
Moreover \(g_Y(t)-g_0\) is a constant multiple of
\(\partial\bar\partial\log\rho\); hence the whole family is
pluriclosed. It has bounded geometry on each compact time interval
strictly before \((T_0+1)/2\), including intervals across the
holonomy crossing. Consequently it already rules out a Chern
analogue of finite-interval Ricci-flow holonomy preservation under
bounded curvature and torsion, even within the pluriclosed class.
The immortal and uniform all-time properties in Section~\ref{sec:crossing} are
additional features rather than the first such obstruction.

\end{document}